\documentclass[11pt,reqno,twosdie]{amsart}

\usepackage{amsmath,amssymb,mathrsfs}

\usepackage[asymmetric,top=2.5cm,bottom=2.8cm,left=2.38cm,right=2.38cm]{geometry}
\geometry{a4paper}
\usepackage{amsmath,amsfonts,amsthm,mathrsfs,amssymb,cite}
\usepackage[usenames]{color}

\usepackage{mathtools} 
\usepackage{graphics}
\usepackage{framed}

\usepackage{enumerate}

\newcommand{\mm}[1]{{\color{black}{#1}}}

\usepackage{graphicx}
\usepackage{latexsym} 
\usepackage{enumerate}

\theoremstyle{plain}
\newtheorem{theorem}{Theorem}[section]
\newtheorem{lemma}[theorem]{Lemma}
\newtheorem{corollary}[theorem]{Corollary}

\theoremstyle{remark}
\newtheorem{definition}[theorem]{Definition}
\newtheorem{remark}[theorem]{Remark}


\frenchspacing

\renewcommand{\eqref}[1]{\textnormal{(\ref{#1})}}

\numberwithin{equation}{section}

\newcommand{\bfx}{\mathbf{x}}



\newcommand{\R}{\mathbb{R}}

\newcommand{\N}{\mathbb{N}}

\def\bsi{{\mathrm{i}}}

\def\bsl{ \boldsymbol {l} }
 
\def\bfx{ \mathbf {x} }
 
\usepackage{color}

\title[Unique identifiability for inverse obstacle scattering problems]{Two unique Identifiability results for  inverse scattering  problems within polyhedral geometries} 

\author{Xinlin Cao}
\address{Johann Radon Institute for Computational and Applied Mathematics, Austrian Academy of Sciences, Altenbergstr. 69 A--4040 Linz, Austria.}
\email{xlcao.pdeip@gmail.com, xinlin.cao@oeaw.ac.at}

\author{Huaian Diao}
\address{School of Mathematics, Jilin University,
Changchun, Jilin 130012, China.}
\email{diao@jlu.edu.cn}


\author{Hongyu Liu}
\address{Department of Mathematics, City University of Hong Kong, Kowloon, Hong Kong, China.}
\email{hongyu.liuip@gmail.com, hongyliu@cityu.edu.hk}

\author{Jun Zou}
\address{Department of Mathematics, The Chinese University of Hong Kong, Hong Kong, China}
\email{zou@math.cuhk.edu.hk}

\date{}

\begin{document}

\begin{abstract}

We consider the unique determinations of impenetrable obstacles or diffraction grating profiles in $\mathbb{R}^3$ by a single far-field measurement within polyhedral geometries. We are particularly interested in the case that the scattering objects are of impedance type. We derive two new unique identifiability results for the inverse scattering problem in the aforementioned two challenging setups. The main technical idea is to exploit certain quantitative geometric properties of the Laplacian eigenfunctions which were initiated in our recent works \cite{CDL2,CDL3}. In this paper, we derive novel geometric properties that generalize and extend the related results in \cite{CDL3}, which further enable us to establish the new unique identifiability results. It is pointed out that in addition to the shape of the obstacle or the grating profile, we can simultaneously recover the boundary impedance parameters. 

\medskip 
\noindent{\bf Keywords} Unique identifiability, inverse obstacle scattering, inverse grating, single far-field pattern, Laplacian eigenfunction, geometric structure,
 
\medskip
\noindent{\bf Mathematics Subject Classification (2010)}: 35P05, 35P25, 35R30, 35Q60

\end{abstract}

\maketitle

\section{Introduction}\label{sec:Intro}


\subsection{Mathematical setup and main results for the inverse obstacle problem.}\label{subsec-obstacle}

Let $\Omega\subset\mathbb{R}^3$ be a bounded Lipschitz domain such that $\mathbb{R}^3\backslash\overline{\Omega}$ is connected. 
Let $u^i$ be an incident plane field of the form
\begin{equation}\label{ui0}
u^i:=u^i(\mathbf{ x};k,\mathbf{d})=e^{\bsi k\mathbf{ x}\cdot\mathbf{d}},\quad \mathbf x\in\mathbb{R}^3,
\end{equation}
where $k\in\mathbb{R}_+$ denotes the wavenumber and $\mathbf{d}\in\mathbb{S}^2$ signifies the incident direction. Denote $u^s$ as the scattered wave field generated from the interruption of the propagation of $u^i$ by an impedance obstacle.
Define  $u:=u^i+u^s$ to be the total wave field. The forward scattering problem is described as follows:
\begin{equation}\label{forward0}
\begin{cases}
& \Delta u + k^2 u = 0\qquad\quad \mbox{in }\ \ \mathbb{R}^3\backslash\overline{\Omega},\medskip\\
& u =u^i+u^s\hspace*{1.56cm}\mbox{in }\ \ \mathbb{R}^3,\medskip\\
& \partial_\nu u+\eta u=0\hspace*{1.3cm}\mbox{on}\ \ \partial\Omega,\medskip\\
&\displaystyle{ \lim_{r\rightarrow\infty}r^{\frac{1}{2}}\left(\frac{\partial u^{s}}{\partial r}-\mathrm{i}ku^{s}\right) =\,0,\quad r:=|x|,}
\end{cases}
\end{equation}
where the last limit is known as the Sommerfeld radiation condition that holds uniformly in $\hat{\mathbf{ x}}:=\mathbf{ x}/|\mathbf{ x}|\in\mathbb{S}^2$. 
The Robin type boundary condition is referred to as the impedance boundary condition for an impedance obstacle, where $\nu$ denotes the exterior unit normal vector to $\partial\Omega$ and $\eta\in L^\infty(\partial\Omega)$ represents the corresponding boundary impedance parameter. If $\eta  \neq 0$, $\Omega$ is  said to be an impedance obstacle. 

The wellposedness of the forward scattering problem \eqref{forward0} is known \cite{CK, Mclean} and there exists a unique solution $u\in H^1_{loc}(\mathbb{R}^3\backslash\overline{\Omega})$ fulfilling the following expansion, which holds uniformly for $\hat{\mathbf x}$:
\begin{equation}\label{eq:far}
u^s(\mathbf x;k, \mathbf d)
=\frac{e^{\mathrm{i}kr}}{r^{{1/2}}} u_{\infty}(\hat{\mathbf x};k,\mathbf{d})+\mathcal{O}\left(\frac{1}{r^{3/2}}\right)\quad\mbox{as }\,r\rightarrow\infty,
\end{equation}
where $u_\infty$ is known as the associated far-field pattern or the scattering amplitude. 

The corresponding inverse obstacle scattering problem to \eqref{forward0} is to recover the shape of the obstacle $\Omega$ as well as the associated impedance parameter $\eta$ by knowledge of the far-field pattern $u_\infty(\hat{\mathbf{ x}};k, \mathbf{d})$. By introducing an abstract operator $\mathcal{F}$ which sends the obstacle to the corresponding far-field pattern, the aforementioned inverse problem can be formulated as
\begin{equation}\label{inverse}
\mathcal{F}(\Omega, \eta)=u_\infty(\hat{\mathbf{ x}};k, \mathbf{d}),
\end{equation} 
which is nonlinear and ill-posed (cf.\cite{CK}).

%


We first introduce the concept of ``admissible" obstacles by presenting certain a-priori conditions on the underlying obstacles in deriving our main unique determination results for the inverse obstacle problem.

\begin{definition}\label{ad obstacle0}
	Let $\Omega\subset\mathbb{R}^3$ be an open polyhedron associated with the boundary impedance condition (i.e. the third  equation) in \eqref{forward0}. $\Omega$ is said to be an \emph{admissible polyhedral obstacle}, if 
	for any  face $\Sigma_m\subset\partial\Omega$ of $\Omega$, the corresponding surface impedance parameter $\eta$ on $\Sigma_m$ is  a real-analytic function with the form 
	\begin{equation}\label{vari-eta0}
	\eta(\mathbf x)\big  |_{ \Sigma_m }=\eta(r,\theta,\phi)\big |_{ \Sigma_m}=\alpha_0+ \sum_{\ell=1}^{\infty}\alpha_{\ell}^{(m)}(\theta, \phi)r^{\ell}, \,  \mathbf x=(r\sin\theta\cos\phi,r\sin\theta\sin\phi,r\cos\theta)\in \Sigma_m,
	\end{equation}
	fulfilling that the complex constant $\alpha_0 \neq 0$,  where $\partial \Omega=\cup_{m=1}^p \Sigma_m$ and $\Sigma_m$  is a face of $\Omega$. 
\end{definition}

\begin{remark}
	It is noted that in Definition \ref{ad obstacle0}, we assume that the constant part  in the expansion \eqref{vari-eta0} of the  impedance parameter on each face of $\Omega$ is a fixed constant  $\alpha_0$. Furthermore,  if $\alpha_{\ell}^{(m)} \equiv 0 $ for $m\in {1,\ldots,  p}$ and $\ell\in \mathbb N$, it is seen that the admissible polyhedral obstacle $\Omega$  has the same boundary impedance parameter on each face of $\Omega$. Compared with the admissible polyhedral obstacles introduced  in \cite[Definition 6.1]{CDL3}, we completely remove the root assumption of the associated Legendre polynomials on the dihedral angle between any two adjacent faces of the underlying polyhedron, which may not be easily verified in practical applications. Therefore, our new unique identifiability result in what follows for determining the admissible impedance obstacle as introduced above can be applied to more general scenarios in the inverse obstacle scattering problem. 
\end{remark} 


In the subsequent discussion, for notational unification, we write an admissible polyhedral obstacle as ($\Omega, \eta$). We also define the admissible complex polyhedral obstacles as follows:


\begin{definition}\label{def60}
	$\Omega$ is said to be an admissible complex polyhedral obstacle if it consists of finitely many pairwise disjoint admissible polyhedral obstacles.
	That is,
	\begin{equation*}
	(\Omega, \eta)=\bigcup_{j=1}^l (\Omega_j, \eta_j),\quad \eta=\sum_{j=1}^l \eta_j\chi_{\partial\Omega_j\cap\partial\Omega}
	\end{equation*}
	where $l\in\mathbb{N}$ and each $(\Omega_j, \eta_j)$ is an admissible polyhedral obstacle.
	
\end{definition}

Following the notations in \cite{CDL3}, we let $\Pi_{1}, \Pi_2$ be two adjacent faces of a polyhedron $\Omega$. Denote $\mathcal{E}(\Pi_{1}, \Pi_2, \bsl)$ to be an edge corner associated with $\Pi_{1}$ and $\Pi_2$. $\mathcal{V}(\{\Pi_\ell\}_{\ell=1}^n, \mathbf{ x}_0)$ signifies the vertex corner formulated by $\Pi_1, \Pi_2, \cdots, \Pi_n$ at the vertex $\mathbf{ x}_0\in \partial \Omega$. It is clear that a vertex corner  $\mathcal{V}(\{\Pi_\ell\}_{\ell=1}^n, \mathbf{ x}_0)$ is composed of finite many edge corners, which intersect at $\mathbf{ x}_0$. Now, recall the definitions for rational and irrational obstacles based on the concept of the rational and irrational corners introduced in \cite{CDL3} as follows.


\begin{definition}\cite[Definition 4.1]{CDL3}
	{\color{black}{
	Let ${\mathcal E}(\Pi_1, \Pi_2,\bsl)$ be an edge corner associated with $\Pi_{1}$ and $\Pi_2$. Denote the dihedral angle of $\Pi_1$ and $\Pi_2$ by $\phi=\alpha \cdot \pi $, $\alpha\in(0,1)$. }}
	If $\phi$ is an irrational dihedral angle, namely, $\alpha$ is an irrational number, then ${\mathcal E}(\Pi_1, \Pi_2,\bsl)$  is said to be an {\it irrational} edge corner. Otherwise it is said to be  a {\it rational} edge corner. For a rational edge corner ${\mathcal E}(\Pi_1, \Pi_2,\bsl)$,  it is called a rational angle of degree $p$ of ${\mathcal E}(\Pi_1, \Pi_2,\bsl)$ if $\alpha = q/p$ with $p,q\in \mathbb N$ and irreducible.
	

\end{definition}

\begin{definition}\cite[Definition 4.2]{CDL3}\label{def:irration}
	Let ${\mathcal V}(\{\Pi_\ell\}_{\ell=1}^n,\bfx_0)$ be a vertex corner, where $n\in \N$ and $n\geq 3$. It is clear that ${\mathcal V}(\{\Pi_\ell\}_{\ell=1}^n,\bfx_0)$ is composed of the following $n$ edge corners:
	$$
	{\mathcal E}_\ell:=	{\mathcal E}(\Pi_\ell, \Pi_{\ell+1},\bsl_\ell), \quad {\mathcal E}_n:={\mathcal E}(\Pi_n, \Pi_{1},\bsl_n), \quad \Pi_{n+1}:=\Pi_1, \quad \ell=1,2,\ldots, n-1, 
	$$
	where $\bsl_\ell$ is the line segment of $\Pi_\ell \cap \Pi_{\ell+1}$ and $\bsl_n$ is the line segment of  $\Pi_n \cap \Pi_{1}$, respectively.  Denote
	\begin{equation}\label{eq:ir index}
	\begin{split}
	I_{\sf IR}&=\{\ell \in \N~|~1\leq \ell\leq n,\quad {\mathcal E}_\ell \mbox{ is an irrational edge corner}\},\\
	I_{\sf R}&=\{\ell \in \N~|~1\leq \ell\leq n,\quad {\mathcal E}_\ell \mbox{ is a rational edge corner}\}.  
	\end{split}
	\end{equation}
	If $\#I_{\sf IR}\geq 1$, then ${\mathcal V}(\{\Pi_\ell\}_{\ell=1}^n,\bfx_0)$ is said to be an {\it irrational } vertex corner. If $\#I_{\sf IR}\equiv 0$, then ${\mathcal V}(\{\Pi_\ell\}_{\ell=1}^n,\bfx_0)$ is said to be a {\it rational } vertex corner. For a { rational } vertex corner ${\mathcal V}(\{\Pi_\ell\}_{\ell=1}^n,\bfx_0)$ composed of edge corners ${\mathcal E}_\ell:={\mathcal E}(\Pi_\ell, \Pi_{\ell+1},\bsl_\ell)$, the largest degree of ${\mathcal E}_\ell\, (\ell=1,\ldots, n)$ is referred to as the \emph{rational degree} of ${\mathcal V}(\{\Pi_\ell\}_{\ell=1}^n,\bfx_0)$. 
\end{definition}

Next, we provide the definition for rational and irrational admissible obstacles.

\begin{definition}\label{ir obstacle}
	Let $(\Omega, \eta)$ be an admissible polyhedral obstacle. If \mm{there exists}  a rational vertex corner, then it is said to be a \emph{rational obstacle}. If all the vertex corners of $\Omega$ are irrational, then it is called an \emph{irrational obstacle}. The smallest degree of the rational corner of $\Omega$ is referred to as the \emph{rational degree} of $\Omega$, which is denoted by ${\sf deg}(\Omega )$. 
\end{definition}

With all the necessary notations and definitions introduced above, we are now in a position to give the local unique identifiability results for an admissible complex polyhedral obstacle by a single far-field measurement with respect to rational and irrational cases, separately.

\begin{theorem}\label{inverse10}
	Considering the scattering problem \eqref{forward0} associated with two admissible irrational complex polyhedral obstacles $(\Omega_j, \eta_j)$ in $\R^3$, $j=1,2$. Let  $u_\infty^j(\hat{\mathbf x}; k, \mathbf{d})$ be the corresponding far-field patterns associated with $(\Omega_j, \eta_j)$ and the incident wave $u^i$ defined in \eqref{ui0}. Let $\mathbf{G}$ be the unbounded connected component of $\mathbb{R}^3\backslash\overline{(\Omega_1\cup\Omega_2)}$.
	Suppose that  
	\begin{equation}\label{eq:cond10}
	u_\infty^1(\hat {\mathbf x}; k, \mathbf{d})= u_\infty^2(\hat {\mathbf x}; k, \mathbf{d}), \ \ 
	\mbox{for } ~~ \mbox{all} ~~\hat{\mathbf x}\in\mathbb{S}^2,
	\end{equation}
	then 
	$
	(\partial \Omega_1 \backslash \partial \overline{ \Omega }_2 )\bigcup (\partial \Omega_2 \backslash \partial  \overline{ \Omega }_1 )
	$
	cannot possess an edge  corner on $\partial \mathbf{G}$. 
	
Moreover,
	\begin{equation}\label{eta0}
	\eta_1=\eta_2\quad\mbox{on}\quad \partial\Omega_1\cap\partial{\Omega_2}.
	\end{equation}
\end{theorem}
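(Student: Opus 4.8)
The plan is to argue by contradiction, turning the equality of far-field patterns into a statement about a single Helmholtz field that is simultaneously smooth across a corner and obeys the homogeneous impedance condition on that corner, and then to rule this out by means of the generalized geometric property of Laplacian eigenfunctions.

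First I would invoke Rellich's lemma together with the unique continuation principle. Since $u^1_\infty\equiv u^2_\infty$ on $\mathbb{S}^2$ by \eqref{eq:cond10} and each $u^s_j$ is a radiating solution of the Helmholtz equation, Rellich's lemma gives $u^s_1=u^s_2$ in $\mathbf{G}$; as the incident fields agree, this yields
\[
u_1=u_2=:u\quad\text{in }\mathbf{G}.
\]
To prove the first assertion I would then suppose, for contradiction, that $(\partial\Omega_1\setminus\partial\overline{\Omega}_2)\cup(\partial\Omega_2\setminus\partial\overline{\Omega}_1)$ possesses an edge corner lying on $\partial\mathbf{G}$. By symmetry we may assume this corner $\mathcal{E}(\Pi_1,\Pi_2,\bsl)$ belongs to $\partial\Omega_1$ and is disjoint from $\partial\overline{\Omega}_2$. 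Since a point of $\partial\mathbf{G}$ lying off $\overline{\Omega}_2$ admits a ball $B$ with $B\cap\overline{\Omega}_2=\emptyset$, the field $u=u_2$ is a real-analytic Helmholtz solution in all of $B$. On the other hand, on the open set $B\cap\mathbf{G}$ we have $u_1=u_2$, so all their derivatives agree there and the traces of $u_2$ from the $\mathbf{G}$ side on the faces $\Pi_1\cap B$, $\Pi_2\cap B$ coincide with those of $u_1$; consequently $u$ inherits the impedance condition $\partial_\nu u+\eta_1 u=0$ on these faces.

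The heart of the argument is the geometric property. Because the obstacle is irrational, the exposed corner --- or, if the exposed edge is itself rational, an irrational vertex corner terminating it, which exists by Definition \ref{ir obstacle} --- satisfies the hypotheses of the generalized vanishing result of this paper (the extension of \cite{CDL3}). Applied to $u$, a Helmholtz solution obeying $\partial_\nu u+\eta u=0$ with the admissible impedance \eqref{vari-eta0} whose constant part $\alpha_0\neq 0$, at an irrational corner, this property forces $u$ to vanish to infinite order at the corner. Real-analyticity of $u_2$ in $\R^3\setminus\overline{\Omega}_2$ then upgrades infinite-order vanishing to $u\equiv 0$ on the connected component containing $B$, hence $u\equiv 0$ throughout $\mathbf{G}$. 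This is impossible, for $u=u^i+u^s$ in $\mathbf{G}$ would force $u^s=-u^i$, whereas the plane wave $u^i$ violates the Sommerfeld radiation condition satisfied by $u^s$. I expect this step --- extracting infinite-order vanishing from the admissible expansion \eqref{vari-eta0} at an irrational corner, and the bookkeeping that reduces an exposed edge corner to a corner meeting the property's hypotheses --- to be the main obstacle; it is precisely the new geometric input advertised in the abstract.

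Finally, for the identity \eqref{eta0} I would work on $\Gamma:=\partial\Omega_1\cap\partial\Omega_2$. On the part of $\Gamma$ lying on $\partial\mathbf{G}$ the trace of $u=u_1=u_2$ is well defined from the $\mathbf{G}$ side, and both impedance conditions hold there, so subtracting $\partial_\nu u+\eta_1 u=0$ and $\partial_\nu u+\eta_2 u=0$ gives $(\eta_1-\eta_2)\,u=0$ on $\Gamma$. It only remains to exclude $u$ vanishing on a relatively open subset of a face: were this to happen, the impedance condition would make its Cauchy data vanish there, and Holmgren's theorem with unique continuation would again force $u\equiv 0$ in $\mathbf{G}$, which we have ruled out. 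Hence $u\neq 0$ almost everywhere on $\Gamma$ and therefore $\eta_1=\eta_2$ on $\Gamma$, completing the proof.
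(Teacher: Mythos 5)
Your proposal is correct and follows essentially the same route as the paper's own proof: Rellich's lemma to get $u^1=u^2$ in $\mathbf{G}$, a small ball at an exposed corner avoiding the other obstacle so that the analytic Helmholtz field inherits both impedance conditions, Theorem \ref{main-vani} at the irrational corner to force infinite-order vanishing and hence $u\equiv 0$ by analyticity and unique continuation, a contradiction with the behavior of the total field at infinity, and finally $(\eta_1-\eta_2)u=0$ on $\partial\Omega_1\cap\partial\Omega_2$ together with Holmgren's theorem for the impedance identification. The only differences are cosmetic (swapped roles of $\Omega_1,\Omega_2$, and the equivalent phrasing of the final contradiction via the radiation condition), plus your explicit nod to the case of a rational exposed edge, which the paper's proof simply sidesteps by asserting the corner angle is irrational.
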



For any vertex $\mathbf{ x}_c$ of a polyhedron $\Omega\in\mathbb{R}^3$, we denote for $r\in\mathbb{R}_+$ that 
$
\Omega_r(\mathbf x_c)=B_r(\mathbf x_c)\cap \mathbb{R}^3\backslash\overline{\Omega}.
$
Define 
\begin{equation}\label{eq:l1}
\mathcal{L}(f)(\mathbf x_c):=\lim_{r\rightarrow+0}\frac{1}{|\Omega_r(\mathbf x_c)|}\int_{\Omega_r(\mathbf x_c)} f(\mathbf x)\ {\rm d} \mathbf x
\end{equation}
if the limit exists for any $f\in L_{loc}^2(\mathbb{R}^3\backslash\overline{\Omega})$. It is easy to see that if $f(\mathbf x)$ is continuous in $\overline{\Omega_{\epsilon_0}(\mathbf x_c)}$ for a sufficiently small $\epsilon_0\in\mathbb{R}_+$, then $\mathcal{L}(f)(\mathbf x_c)=f(\mathbf x_c)$. According to \cite[Remark 6.11]{CDL3}, \eqref{eq:l1} can  be fulfilled in certain practical scenarios. 

Using the technical assumption  \eqref{eq:l1}, the unique determination of rational obstacles can be stated as follows. 

\begin{theorem}\label{inverse2}
	For a fixed $k\in\mathbb{R}_+$, let $(\Omega_j, \eta_j)$, $j=1,2$, be 
	two admissible complex rational obstacles,
	with $u^j_{\infty}(\hat{\mathbf x};k,\mathbf{d})$ being their corresponding far-field patterns associated with 
	the incident field $u^i$ defined in \eqref{ui0}, where $\mathbf d$ is the incident direction. Assume that 
	\begin{align}\label{eq:deg cond}
		{\sf deg}(\Omega_j ) \geq 3, \quad j=1,2,
	\end{align}
	where ${\sf deg}(\Omega_j )$ is the  rational degree of $\Omega_j$  defined in Definition \ref{ir obstacle}. 
	We further write $\mathbf{G}$ for the unbounded connected component of $\mathbb{R}^3\backslash\overline{(\Omega_1\cup\Omega_2)}$. 
	Then if 
	\begin{equation}\label{cond51}
	u_{\infty}^1(\hat {\mathbf x}; k, \mathbf{d} )= u_{\infty}^2(\hat {\mathbf x}; k, \mathbf{d}), \ \ \hat{\mathbf x}\in\mathbb{S}^2,\quad\mbox{and}\quad
	\mathcal{L}(\nabla u^j)(\mathbf x_c)\neq 0,\ j=1,2,
	\end{equation}
	for all vertices $\mathbf x_c$ of $\Omega$, the set 
	$
	(\partial \Omega_1 \backslash \partial \overline{ \Omega_2 } )\cup 
	(\partial {\Omega_2 } \backslash \partial\overline{  \Omega_1}  )
	$
	can not possess an  edge corner on  $\partial \mathbf{G}$.
	\mm{Moreover,
		\begin{equation}\notag
		\eta_1={\eta_2}\quad\mbox{on}\quad \partial\Omega_1\cap\partial{\Omega_2}.
		\end{equation}}
\end{theorem}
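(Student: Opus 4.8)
The plan is to argue by contradiction, combining Rellich's lemma with the quantitative geometric properties of Laplacian eigenfunctions at rational corners. First I would invoke Rellich's lemma together with the unique continuation principle: since the far-field patterns coincide in \eqref{cond51}, the scattered fields agree throughout the unbounded component, so $u^1=u^2=:u$ in $\mathbf{G}$, and $u$ is a single radiating Helmholtz solution there.

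Next, suppose for contradiction that $(\partial\Omega_1\backslash\partial\overline{\Omega_2})\cup(\partial\Omega_2\backslash\partial\overline{\Omega_1})$ possesses an edge corner on $\partial\mathbf{G}$. Without loss of generality this edge belongs to $\partial\Omega_1$ and lies in $\partial\Omega_1\backslash\partial\overline{\Omega_2}$; it is one edge of a vertex corner $\mathcal{V}(\{\Pi_\ell\},\mathbf{x}_c)$ at a vertex $\mathbf{x}_c$ of $\Omega_1$ on $\partial\mathbf{G}$. Because $\mathbf{x}_c\notin\partial\Omega_2$ while $\mathbf{x}_c\in\partial\mathbf{G}$, in a small ball $B_r(\mathbf{x}_c)$ the field $u$ coincides with the classical solution $u^2$, so $u$ extends as a Helmholtz solution across the faces of $\Omega_1$; at the same time $u$ inherits the impedance condition $\partial_\nu u+\eta_1 u=0$ on $\partial\Omega_1\cap B_r(\mathbf{x}_c)$, with $\eta_1$ of the admissible real-analytic form \eqref{vari-eta0} and $\alpha_0\neq 0$. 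This is exactly the local configuration governed by the geometric eigenfunction results of the paper.

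The crux is to apply the generalized geometric property at this rational vertex corner. Since $\Omega_1$ is rational with ${\sf deg}(\Omega_1)\geq 3$, I would expand $u$ near $\mathbf{x}_c$ into homogeneous spherical-wave modes (spherical Bessel functions times spherical harmonics) and substitute the expansion into the impedance conditions on the faces meeting at $\mathbf{x}_c$. The rationality of the dihedral angles produces algebraic compatibility relations among the expansion coefficients, and the hypothesis that the rational degree is at least $3$ is precisely what forces the lowest-order modes — those of homogeneity $0$ and $1$, which encode $u(\mathbf{x}_c)$ and $\nabla u(\mathbf{x}_c)$ — to vanish; equivalently $\mathcal{L}(\nabla u)(\mathbf{x}_c)=0$. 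This contradicts the non-degeneracy hypothesis $\mathcal{L}(\nabla u^j)(\mathbf{x}_c)\neq 0$ in \eqref{cond51}, so no such edge corner can exist and $\partial\Omega_1,\partial\Omega_2$ coincide on $\partial\mathbf{G}$. The main obstacle is exactly this step. In the irrational setting of Theorem \ref{inverse10} the mere irrationality of the angle forces vanishing, so neither a degree condition nor the averaged-gradient condition is needed; for rational corners the angle being a rational multiple of $\pi$ admits additional low-order homogeneous modes compatible with the boundary conditions, so vanishing cannot follow from incompatibility alone. It is the degree $\geq 3$ hypothesis together with the averaged-gradient non-degeneracy that eliminates the surviving modes, and this is where the present paper's extension of \cite{CDL3} — removing the root assumption on the associated Legendre polynomials — becomes essential.

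Finally, with the shapes identified, I would recover the impedance by a Cauchy-data argument on $\Gamma:=\partial\Omega_1\cap\partial\Omega_2$. From $u^1=u^2$ in $\mathbf{G}$, the traces and normal derivatives agree on $\Gamma$, so subtracting the two impedance conditions $\partial_\nu u^j+\eta_j u=0$ gives $(\eta_1-\eta_2)\,u=0$ on $\Gamma$. If $u$ vanished identically on an entire face of $\Gamma$, then its Cauchy data would vanish there and Holmgren's uniqueness theorem would force $u\equiv 0$ in $\mathbf{G}$, i.e. $u^s=-u^i$, which is incompatible with the Sommerfeld radiation condition for a nontrivial incident plane wave. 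Hence $u\not\equiv 0$ on each face, and since each $\eta_j$ is real-analytic on the face, we conclude $\eta_1=\eta_2$ on $\Gamma$, completing the proof.
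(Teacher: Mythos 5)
Your proposal is correct and follows essentially the same route as the paper's own proof: Rellich's lemma to get $u^1=u^2$ in $\mathbf{G}$, transfer of the impedance conditions to the faces meeting the corner, then the vanishing result of Theorem \ref{main-vani} (where irrationality or rational degree $\geq 3$ yields vanishing of $u$ and $\nabla u$ at the corner) to contradict the hypothesis $\mathcal{L}(\nabla u^j)(\mathbf x_c)\neq 0$ in \eqref{cond51}, and finally the Cauchy-data/Holmgren argument combined with the radiation-condition contradiction to conclude $\eta_1=\eta_2$. Your sketch of the spherical-wave expansion step is just a restatement of the proof of Theorem \ref{main-vani}, which the paper simply cites, and your organization of the impedance-recovery step (via non-vanishing of $u$ on each face plus analyticity of $\eta_j$) is the contrapositive of the paper's version, so the two proofs coincide in substance.
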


Theorems \ref{inverse10} and \ref{inverse2} actually reveal the unique determination for a certain kind of impedance obstacles locally in the neighborhood of the corner. It is easy to verify that if the underlying admissible complex obstacles are convex, then there holds the global uniqueness results accordingly.  Indeed, one has
\begin{corollary}\label{coro-inver}
For a fixed $k\in\mathbb{R}_+$,  let $(\Omega, \eta)$ and $(\widetilde\Omega, \widetilde\eta)$ be 
	two convex admissible irrational complex polyhedral obstacles, with $u^j_{\infty}(\hat{\mathbf x};k,\mathbf{d})$, $j=1,2$, being their corresponding far-field patterns associated with 
	the incident field $u^i$ defined in \eqref{ui0}, where $\mathbf d$ is the incident direction. If \eqref{eq:cond10} is fulfilled, 
	then
	\begin{equation}\notag
		\Omega=\widetilde{\Omega },\quad \eta=\widetilde{\eta}. 
	\end{equation}

\end{corollary}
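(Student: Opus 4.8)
The plan is to derive Corollary \ref{coro-inver} from Theorem \ref{inverse10} by a purely geometric contradiction argument, reducing the global uniqueness to the local non-existence of edge corners in the symmetric difference of the two boundaries. First I would argue by contradiction, assuming $\Omega\neq\widetilde\Omega$. Writing $\mathbf{G}$ for the unbounded connected component of $\mathbb{R}^3\setminus\overline{\Omega\cup\widetilde\Omega}$, the aim is to manufacture an edge corner of the set $(\partial\Omega\setminus\partial\overline{\widetilde\Omega})\cup(\partial\widetilde\Omega\setminus\partial\overline{\Omega})$ lying on $\partial\mathbf{G}$, which is exactly the configuration excluded by Theorem \ref{inverse10} once the far-field coincidence \eqref{eq:cond10} is assumed.

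The key geometric observation is that two distinct convex polyhedra cannot have all vertices of each contained in the closure of the other. Indeed, a convex polyhedron is the convex hull of its vertex set, so if every vertex of $\Omega$ belonged to $\overline{\widetilde\Omega}$ and, symmetrically, every vertex of $\widetilde\Omega$ belonged to $\overline{\Omega}$, then convexity would force $\overline{\Omega}\subseteq\overline{\widetilde\Omega}$ and $\overline{\widetilde\Omega}\subseteq\overline{\Omega}$, hence $\Omega=\widetilde\Omega$. Therefore, after possibly interchanging the roles of $\Omega$ and $\widetilde\Omega$, I may fix a vertex $\mathbf{x}_0$ of $\Omega$ with $\mathbf{x}_0\notin\overline{\widetilde\Omega}$.

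Next I would localize at $\mathbf{x}_0$. Since $\mathbf{x}_0\notin\overline{\widetilde\Omega}$ and $\widetilde\Omega$ is closed, there is a radius $\rho>0$ with $B_\rho(\mathbf{x}_0)\cap\overline{\widetilde\Omega}=\emptyset$, so that $\overline{\Omega\cup\widetilde\Omega}\cap B_\rho(\mathbf{x}_0)=\overline{\Omega}\cap B_\rho(\mathbf{x}_0)$. Because $\Omega$ is a convex polyhedron and $\mathbf{x}_0$ is a genuine vertex, the faces and edges of $\Omega$ meeting at $\mathbf{x}_0$ form a vertex corner whose exterior solid angle is positive; the corresponding local exterior is nonempty, and since $\widetilde\Omega$ is absent in $B_\rho(\mathbf{x}_0)$ this local exterior belongs to $\mathbf{G}$. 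Consequently $\mathbf{x}_0\in\partial\mathbf{G}$, and the edges of $\Omega$ emanating from $\mathbf{x}_0$ lie on $\partial\mathbf{G}$ and belong to $\partial\Omega\setminus\partial\overline{\widetilde\Omega}$. Each pair of adjacent faces at $\mathbf{x}_0$ therefore produces an edge corner of $(\partial\Omega\setminus\partial\overline{\widetilde\Omega})\cup(\partial\widetilde\Omega\setminus\partial\overline{\Omega})$ on $\partial\mathbf{G}$, contradicting Theorem \ref{inverse10}. This forces $\Omega=\widetilde\Omega$, after which $\partial\Omega\cap\partial\widetilde\Omega=\partial\Omega$, and the ``moreover'' clause \eqref{eta0} of Theorem \ref{inverse10} immediately yields $\eta=\widetilde\eta$.

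The main obstacle I anticipate is the connectivity claim that the local exterior at $\mathbf{x}_0$ genuinely meets the unbounded component $\mathbf{G}$, rather than being trapped in a bounded cavity formed between $\Omega$ and $\widetilde\Omega$. To make this rigorous I would again exploit convexity: the exterior of the convex body $\Omega$ is connected and unbounded, and since $\mathbf{x}_0\notin\overline{\widetilde\Omega}$ one can follow an exterior ray issuing from $\mathbf{x}_0$ in a direction within the outward normal cone of $\Omega$ at $\mathbf{x}_0$ that avoids the bounded convex set $\widetilde\Omega$ entirely; such a ray escapes to infinity while staying in $\mathbb{R}^3\setminus\overline{\Omega\cup\widetilde\Omega}$, thereby certifying $\mathbf{x}_0\in\partial\mathbf{G}$ and completing the reduction.
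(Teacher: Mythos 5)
Your overall strategy --- reducing the global uniqueness to Theorem \ref{inverse10} by showing that two distinct convex polyhedra must have a vertex, hence an edge corner, of one lying outside the closure of the other and on $\partial\mathbf{G}$ --- is exactly the argument the paper intends (the paper explicitly omits the detailed proof of Corollary \ref{coro-inver}). Your convex-hull argument producing a vertex $\mathbf{x}_0$ of $\Omega$ with $\mathbf{x}_0\notin\overline{\widetilde\Omega}$ is correct, as is the deduction $\eta=\widetilde\eta$ from \eqref{eta0} once $\Omega=\widetilde\Omega$ is established.

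However, your final connectivity step contains a genuine gap: it is in general \emph{not} possible to choose ``an exterior ray issuing from $\mathbf{x}_0$ in a direction within the outward normal cone of $\Omega$ at $\mathbf{x}_0$ that avoids the bounded convex set $\widetilde\Omega$ entirely.'' Counterexample: take $\Omega=(0,1)^3$, $\mathbf{x}_0=(1,1,1)$, and $\widetilde\Omega=\{\mathbf{x}\in\mathbb{R}^3:\ 3.1<x_1+x_2+x_3<100\}\cap(-10^3,10^3)^3$, a convex polytope disjoint from $\overline{\Omega}$ with $\mathbf{x}_0\notin\overline{\widetilde\Omega}$. The outward normal cone of $\Omega$ at $\mathbf{x}_0$ is $\{\mathbf{d}:\ d_1,d_2,d_3\geq0\}$, and every unit vector $\mathbf{d}$ in this cone satisfies $d_1+d_2+d_3\geq1$; hence along the ray $\mathbf{x}_0+t\mathbf{d}$ the coordinate sum is at least $3+t$, so the ray enters $\widetilde\Omega$ before $t=0.2$. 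Thus no normal-cone ray certifies $\mathbf{x}_0\in\partial\mathbf{G}$ in this configuration. The fact you need is nevertheless true and can be restored by a separating-hyperplane argument showing that $\mathbb{R}^3\setminus\overline{(\Omega\cup\widetilde\Omega)}$ has \emph{no} bounded components when both obstacles are convex: if $\mathbf{x}\notin\overline{\Omega}\cup\overline{\widetilde\Omega}$, strictly separate $\mathbf{x}$ from the compact convex set $\overline{\Omega}$ by one plane and from $\overline{\widetilde\Omega}$ by another; the intersection of the two open half-spaces containing $\mathbf{x}$ is a nonempty open convex (hence connected) set disjoint from both obstacles, and a nonempty intersection of two open half-spaces in $\mathbb{R}^3$ is unbounded, so $\mathbf{x}\in\mathbf{G}$. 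Applying this to the exterior points in $B_\rho(\mathbf{x}_0)$ shows that the whole local exterior at $\mathbf{x}_0$ lies in $\mathbf{G}$, so the edges of $\Omega$ emanating from $\mathbf{x}_0$ (being limits of such exterior points) lie on $\partial\mathbf{G}$, and your contradiction with Theorem \ref{inverse10} goes through.
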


Compared with the unique identifiability study in \cite[Section 6]{CDL3}, the results presented in this paper significantly relax the number of the measurements from ``at most two far-field patterns" to ``only one far-field pattern". This is made possible by relaxing the technical condition $u(\mathbf{0})=0$ in the vanishing properties of Laplacian eigenfunctions in $\R^3$, which will be systematically investigated in the subsequent sections. Moreover, the a-priori conditions imposed on ``admissible obstacles" are also relaxed by removing the conditions on the associated Legendre polynomials. This is derived from the fact that the vanishing orders of the Laplacian eigenfunction $u$ at an edge corner can be determined by corresponding two intersecting adjacent planes without any a-prior information of the other planes; see \cite[Theorem 3.1]{CDL3} for more relevant discussions.

\subsection{Mathematical setup and main results for the inverse diffraction grating problem}

First, we give a brief review of the mathematical setup of the diffraction grating profile. Assume that the diffraction grating involves an impenetrable surface $\Lambda_f$ which is $2\pi$-periodic with respect to $\mathbf{ x}':=(x_1, x_2)$. Precisely speaking, denote
\begin{equation}\label{def-grating0}
\Lambda_f=\{\mathbf{ x}:=(\mathbf{ x}', x_3)\in\R^3; x_3=f(\mathbf{ x}')\},
\end{equation}
where $f$ is a bi-periodic Lipschitz function with period $2\pi$ with respect to $x_1$ and $x_2$. Let 
\begin{equation}\label{def-space0}
\Omega_f:=\{\mathbf{ x}\in\R^3; \mathbf{ x}'\in\R^2, x_3>f(\mathbf{ x}')\}
\end{equation}
be the unbounded domain filled with an isotropic homogeneous medium. Suppose that the incident plane wave $u^i(\mathbf{ x}; k, \mathbf{d})$, where $k\in\R_+$ and
\begin{equation}\label{def-dire0}
\mathbf{d}:=\mathbf{d}(\theta, \phi)=(\sin\phi\cos\theta, \sin\phi\sin\theta, -\cos\phi),\quad \theta\in[0,2\pi),\ \phi\in(-\pi/2,\pi/2),
\end{equation}
propagates to $\Lambda_f$ from the top. The total wave field fulfills the following Helmholtz system
\begin{equation}\label{model0}
\Delta u+k^2 u=0 \ \mbox{ in } \ \Omega_f; \quad \partial_\nu u+\eta u=0 \ \mbox{ on } \ \Lambda_f,
\end{equation}
where $\eta$ denotes the surface impedance parameter. To ensure the wellposedness of \eqref{model0}, the total wave field $u$ is supposed to be ${\alpha}$-quasiperiodic with respect to $x_1$ and $x_2$, which can be defined more rigorously as 

\begin{definition}\label{alpha-quasi0}
	$u$ is said to be $\alpha$-quasiperiodic in $\mathbf{ x}'$ with respect to $x_1$ and $x_2$ if there holds
	\begin{equation}\notag
	u(\mathbf{ x}'+2\pi \mathbf{n},x_3)=e^{\bsi 2\pi {\alpha}\cdot \mathbf n}u(\mathbf{ x}',x_3)
	\end{equation}
	for any $\mathbf n=(n_1, n_2)\in\mathbb{Z}^2$.
\end{definition}

The corresponding scattered wave $u^s$ satisfies the following Rayleigh series expansion
\begin{equation}\label{rayley0}
u^s(\mathbf{ x})=\sum_{\mathbf{n}\in\mathbb{Z}^2}u_n e^{\bsi\alpha_n\cdot\mathbf{ x}'+\bsi\beta_n x_3}:=\sum_{\mathbf{n}\in\mathbb{Z}^2}u_n e^{\bsi \bf{\xi}_n\cdot\mathbf{ x}},\quad x_3>\max_{\mathbf{ x}'\in[0,2\pi)^2}f(\mathbf{ x}'),
\end{equation}
where $u_n:=u_n(k)\in\mathbb{C}$ are the Rayleigh coefficients of $u^s$ and
\begin{equation}\label{nota0}
{\bf{\xi_n}}=(\alpha_n, \beta_n),\ \mbox{with } \
\alpha_n:=\mathbf{n}+\alpha, \beta_n^2=k^2-|\alpha_n|^2,
\end{equation}
where $\Im{\beta_n}\geq0$ if $|\alpha_n|^2>k^2$. The existence and uniqueness of the $\alpha$-quasiperiodic solution to \eqref{model0} with $\eta$ being a constant fulfilling $\Im\eta>0$ can be seen in \cite{Alber}.

Define
\begin{equation}\notag
\Gamma_b:=\{(\mathbf{ x}', x_3); \mathbf{ x}'\in[0, 2\pi)^2, x_3=b\}, \quad b>\max_{\mathbf{ x}'\in[0,2\pi)^2}|f(\mathbf{ x}')|
\end{equation}

The inverse problem associated with \eqref{model0} is to determine $\Lambda_f$ from the knowledge of $u(\mathbf{ x}|_{\Gamma_b}; k, \mathbf{d})$. By introducing an abstract operator $\mathcal{F}$ which sends the information of $\Lambda_f$ to the measurement $u(\mathbf{ x}; k, \mathbf{d})$ on $\Gamma_b$, the inverse problem can be formulated as
\begin{equation}\label{inver-grating0}
\mathcal{F}(\Lambda_f, \eta)=u(\mathbf{ x}|_{\Gamma_b}; k, \mathbf{d}).
\end{equation}
The unique identifiability for the diffraction grating with impedance boundary condition by finite measurements  has been an open problem for a long time. In order to investigate this problem, similar to the study on the inverse obstacle problems, we first propose the necessary definition of so-called ``admissible" polyhedral gratings considered in our work as follows.

\begin{definition}\label{adm-grating0}
	Let $(\Lambda_f, \eta)$ be a bi-periodic grating as described in \eqref{def-grating0}. It is said to be an admissible polyhedral diffraction grating if $\Lambda_f$ is a polyhedral Lipschitz surface in $\R^3$, consisting of a finite number of planar faces in one periodic cell $[0, 2\pi)\times[0, 2\pi)$, and
	for any  planar faces of $\Lambda_f$,  the corresponding surface impedance parameter $\eta$ is a real-analytic function with the form \eqref{vari-eta0}, where the constant part of the expansion \eqref{vari-eta0} is nonzero.
%
%
\end{definition}

Following the definitions for irrational and rational vertex corners in Definition \ref{def:irration}, we have the following concept for admissible irrational and rational polyhedral diffraction  gratings.
\begin{definition}\label{ir diff}
	Let $(\Lambda_f, \eta)$ be an admissible polyhedral diffraction  grating. If \mm{there exists}  a rational vertex corner in one period, then it is said to be a \emph{rational polyhedral   diffraction  grating}. If all the vertex corners of $\Lambda_f$ in one period are irrational, then it is called an \emph{irrational polyhedral diffraction  grating}. The smallest degree of the rational corner of $\Lambda_f$ is referred to as the \emph{rational degree} of $\Lambda_f$, which is denoted by ${\sf deg}(\Lambda_f )$. 
\end{definition} 

We would like to point out that the vertex corner considered in Definition \ref{ir diff} includes the edge corner which is intersected by two adjacent planes as a special case.


%

Now, we can provide our main unique determination results for the inverse diffraction grating problem \eqref{inver-grating0} with respect to irrational and rational structures, respectively.


\begin{theorem}\label{thm-grat0}
	Let $(\Lambda_f,\eta_f)$ and $(\Lambda_g,\eta_g)$ be two admissible irrational polyhedral diffraction gratings and $\mathbf{G}$ be the unbounded connected component of $\Omega_f\cap\Omega_g$. Let $k\in\R_+$ be fixed and $\mathbf{d}\in\mathbb{S}^2$ be the corresponding incident direction defined in \eqref{def-dire0}. Let $\Gamma_b$ be a measurement boundary given by
	\begin{equation}\notag
	\Gamma_b:=\{(\mathbf{ x}', b)\in\R^3; \mathbf{ x}'\in[0,2\pi)^2, x_3=b, b>\max\{\max_{\mathbf{ x}'\in[0,2\pi)^2}|f(\mathbf{ x}')|,\max_{\mathbf{ x}'\in[0,2\pi)^2}|g(\mathbf{ x}')|\}\}.
	\end{equation}
	Suppose that $u_f(\mathbf{ x}; k, \mathbf{d})$ and $u_g(\mathbf{ x}; k, \mathbf{d})$ are the total wave fields measured on $\Gamma_b$ associate with $(\Lambda_f, \eta_f)$ and $(\Lambda_g, \eta_g)$, respectively. If there holds that
	\begin{equation}\label{equi-cond0}
	u_f(\mathbf{ x}; k, \mathbf{d})=u_g(\mathbf{ x}; k, \mathbf{d}) \quad\mbox{for}\quad \mathbf{ x}\in\Gamma_b,
	\end{equation}
	then $\partial\mathbf{G}\backslash\partial\Lambda_f$ can not possess an edge  corner of $\Lambda_g$ and $\partial\mathbf{G}\backslash\partial\Lambda_g$ can not possess an edge corner of $\Lambda_f$. Moreover, 
	\begin{equation}\notag
		\eta_f=\eta_g\quad\mbox{on}\quad \Lambda_f\cap\Lambda_g.
	\end{equation}
\end{theorem}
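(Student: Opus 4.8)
The plan is to mirror the strategy behind Theorem \ref{inverse10} for the obstacle problem, adapted to the quasi-periodic grating geometry, in three stages: promote the data equality on $\Gamma_b$ to field equality in $\mathbf{G}$, rule out edge corners by a contradiction, and then recover the impedance.

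First I would convert \eqref{equi-cond0} into the identity of the total fields in $\mathbf{G}$. Since $u^i$ is common to both problems, \eqref{equi-cond0} gives $u_f^s=u_g^s$ on $\Gamma_b$. Both scattered fields are $\alpha$-quasiperiodic and, for $x_3$ above the respective profiles, admit the Rayleigh expansion \eqref{rayley0}; because the Rayleigh coefficients $u_n$ are exactly the $\alpha$-shifted Fourier coefficients of $u^s|_{\Gamma_b}$, equality on $\Gamma_b$ forces all coefficients to coincide, so $u_f^s=u_g^s$ for $x_3\geq b$. As $u_f^s-u_g^s$ solves the Helmholtz equation and is real-analytic in $\mathbf{G}$, unique continuation propagates the identity through the connected set $\mathbf{G}$, yielding
\[
u:=u_f=u_g\quad\text{in }\mathbf{G}.
\]
This is a routine Rellich-type lemma for periodic structures.

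Second I would argue by contradiction. Suppose $\partial\mathbf{G}\backslash\partial\Lambda_f$ contains an edge corner $\mathcal{E}(\Pi_1,\Pi_2,\bsl)$ of $\Lambda_g$ with irrational dihedral angle, meeting the edge at a point $\mathbf{x}_c$. Since $\mathbf{x}_c\in\overline{\mathbf{G}}\subset\overline{\Omega_f}$ but $\mathbf{x}_c\notin\Lambda_f=\partial\Omega_f$, the point lies in the interior of $\Omega_f$, so $u_f$ is a real-analytic Helmholtz solution in a whole ball $B_r(\mathbf{x}_c)\subset\Omega_f$. Locally $\partial\mathbf{G}=\Lambda_g$ near $\mathbf{x}_c$, and on the two faces $\Pi_1,\Pi_2\subset\Lambda_g$ the field $u_g$ obeys $\partial_\nu u_g+\eta_g u_g=0$ with $\eta_g$ of the admissible form \eqref{vari-eta0}, $\alpha_0\neq 0$. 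Because $u=u_f=u_g$ in $\mathbf{G}\cap B_r(\mathbf{x}_c)$, the common field $u$ is a Helmholtz solution in $B_r(\mathbf{x}_c)$ satisfying homogeneous impedance conditions on two planes meeting at an irrational edge corner. I would then invoke the generalized vanishing property for Laplacian eigenfunctions at an irrational edge corner (the novel geometric result of this paper, which crucially no longer requires the a-priori condition $u(\mathbf{x}_c)=0$): it forces $u$ to vanish to infinite order at $\mathbf{x}_c$, hence $u\equiv 0$ in $B_r(\mathbf{x}_c)$ and, by unique continuation, $u_f\equiv 0$ throughout $\Omega_f$. This is impossible since $u_f=u^i+u_f^s$ carries the nontrivial downward-propagating incident plane wave, whose form is incompatible with the outgoing Rayleigh structure of $u_f^s$. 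Interchanging the roles of $f$ and $g$ disposes of the symmetric claim.

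Third, once edge corners are excluded on both $\partial\mathbf{G}\backslash\partial\Lambda_f$ and $\partial\mathbf{G}\backslash\partial\Lambda_g$, the two profiles must locally share a common planar piece $\Pi$ of positive surface measure along $\Lambda_f\cap\Lambda_g$. On $\Pi$ the single field $u$ satisfies both $\partial_\nu u+\eta_f u=0$ and $\partial_\nu u+\eta_g u=0$; subtracting gives $(\eta_f-\eta_g)u=0$ on $\Pi$. Since $u$ is a nontrivial real-analytic Helmholtz solution, its zero set within $\Pi$ has vanishing surface measure, so $\eta_f=\eta_g$ a.e.\ on $\Pi$, and the real-analyticity in \eqref{vari-eta0} upgrades this to $\eta_f=\eta_g$ on all of $\Lambda_f\cap\Lambda_g$. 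The main obstacle is the geometric input used in the second stage, namely that an impedance eigenfunction with nonzero leading coefficient on two planes forming an irrational dihedral angle must vanish to infinite order at the edge, established without the auxiliary hypothesis $u(\mathbf{x}_c)=0$; this edge-based quantitative refinement of the results in \cite{CDL3} is exactly what permits the reduction from two measurements to a single one.
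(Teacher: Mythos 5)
Your first two stages reproduce the paper's own proof in all essentials: the Rellich-type step promoting \eqref{equi-cond0} to $u_f=u_g$ in $\mathbf{G}$ (the paper argues via well-posedness of the problem above $\Gamma_b$ plus analyticity, you via matching Rayleigh/Fourier coefficients — both routine and equivalent), then the contradiction at an irrational edge corner: the field that is analytic across the corner inherits the impedance conditions on the two faces of the other grating, Theorem \ref{main-vani} forces infinite-order vanishing, analytic continuation kills the field in the whole half-space above the relevant profile, and this is incompatible with the Rayleigh expansion because $k\mathbf{d}\notin\{\xi_n\}$ and exponentials with distinct wave vectors are linearly independent (the paper's Lemma \ref{ortho}); you apply this with the roles of $f$ and $g$ swapped relative to the paper, which is immaterial by symmetry. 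Two small omissions there: you should note that hypothesis \eqref{new-eta} of Theorem \ref{main-vani} is automatically satisfied for admissible gratings because the constant part $\alpha_0$ of the impedance is the same on every face (Remark \ref{rem27}), and that infinite-order vanishing implies $u\equiv 0$ in the ball via real-analyticity; both are stated in the paper and easy to supply.

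Stage three, however, contains a genuine flaw. You claim that since $u$ is a nontrivial real-analytic Helmholtz solution, its zero set within the common face $\Pi$ has vanishing surface measure. That implication is false: a nontrivial solution of the Helmholtz equation can vanish identically on a plane — for instance $u(\mathbf{x})=\sin(kx_3)$ vanishes on all of $\{x_3=0\}$ — so nontriviality of $u$ as a function on $\R^3$ gives no control on the zero set of its restriction to $\Pi$. What you actually need is that $u$ cannot vanish on any open patch $\Gamma\subset\Pi$, and this does not come for free from analyticity; it comes from the boundary condition: if $u=0$ on $\Gamma$, then the impedance relation forces $\partial_\nu u=0$ on $\Gamma$ as well, so the Cauchy data of $u$ vanish on $\Gamma$, and Holmgren's uniqueness theorem together with analytic continuation yields $u\equiv 0$ in the unbounded component, contradicting the Rayleigh-expansion argument you already used in stage two. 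This is exactly the route the paper takes (assume $\eta_f\neq\eta_g$ on an open subset $\Gamma$, deduce $(\eta_f-\eta_g)u=0$, hence $u=\partial_\nu u=0$ on $\Gamma$, then Holmgren and the contradiction at infinity). Your measure-theoretic shortcut rests on a false premise, although all the ingredients for the correct argument are already present in your own stage two, so the repair is short.
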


Similar to Theorem \ref{inverse2}, if we assume the total wave field to \eqref{model0}  satisfies the condition \eqref{eq:l1}  on all vertices  of an admissible polyhedral diffraction  grating,  the uniqueness results for admissible rational polyhedral diffraction gratings can be stated as

\begin{theorem}\label{thm-grat2}
	Let $(\Lambda_f,\eta_f)$ and $(\Lambda_g,\eta_g)$ be two admissible rational polyhedral diffraction gratings 
	and $\mathbf{G}$ be the unbounded connected component of $\Omega_f\cap\Omega_g$. Let $k\in\R_+$ be fixed and $\mathbf{d}\in\mathbb{S}^2$ be the corresponding incident direction defined in \eqref{def-dire0}. Let $\Gamma_b$ be a measurement boundary given by
	\begin{equation}\notag
	\Gamma_b:=\{(\mathbf{ x}', b)\in\R^3; \mathbf{ x}'\in[0,2\pi)^2, x_3=b, b>\max\{\max_{\mathbf{ x}'\in[0,2\pi)^2}|f(\mathbf{ x}')|,\max_{\mathbf{ x}'\in[0,2\pi)^2}|g(\mathbf{ x}')|\}\}.
	\end{equation}
	Assume that 
	\begin{align}\label{eq:deg cond diff}
		{\sf deg}(\Lambda_f ) \geq 3 \mbox{ and } {\sf deg}(\Lambda_g ) \geq 3, 
	\end{align}
	where ${\sf deg}(\Lambda_f ) $ and  ${\sf deg}(\Lambda_g ) $ are the  rational degrees of $\Lambda_f$ and $\Lambda_g$  defined in Definition \ref{ir diff}, respectively.  
	Suppose that $u_f(\mathbf{ x}; k, \mathbf{d})$ and $u_g(\mathbf{ x}; k, \mathbf{d})$ are the total wave fields measured on $\Gamma_b$ associate with $(\Lambda_f, \eta_f)$ and $(\Lambda_g, \eta_g)$, respectively. If there holds that
	\begin{equation}\label{equi-cond2}
	u_f(\mathbf{ x}; k, \mathbf{d})=u_g(\mathbf{ x}; k, \mathbf{d}) \quad\mbox{for}\quad \mathbf{ x}\in\Gamma_b,
	\quad
\mathcal{L}(\nabla u_f)(\mathbf{ x}_c^f)\neq0,\quad \mathcal{L}(\nabla u_g)(\mathbf{ x}_c^g)\neq0
	\end{equation}
where $\mathbf{ x}_c^f$ and $\mathbf{ x}_c^g$ are arbitrary vertices  of the admissible rational polyhedral diffraction gratings $f$ and $g$, respectively, 	then $\partial\mathbf{G}\backslash\partial\Lambda_f$ can not possess an edge corner of $\Lambda_g$ and $\partial\mathbf{G}\backslash\partial\Lambda_g$ can not possess an edge  corner of $\Lambda_f$. Moreover
	\begin{equation}\notag
	\eta_f=\eta_g\quad\mbox{on}\quad \Lambda_f\cap\Lambda_g.
	\end{equation}
\end{theorem}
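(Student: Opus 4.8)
The plan is to parallel the rational obstacle argument of Theorem~\ref{inverse2}, adapted to the quasi-periodic grating setting. I would first show that the two total fields coincide on the common region. From the measurement identity \eqref{equi-cond2} on $\Gamma_b$ together with the Rayleigh expansion \eqref{rayley0}, the Rayleigh coefficients $u_n$ of $u_f^s$ and $u_g^s$ must agree, being uniquely determined by the trace on $\Gamma_b$ and the common incident field; hence $u_f\equiv u_g$ for $x_3>\max\{f,g\}$, and by unique continuation for the Helmholtz equation $u_f\equiv u_g=:u$ throughout the unbounded connected component $\mathbf{G}$ of $\Omega_f\cap\Omega_g$.

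Next I would argue by contradiction. Suppose $\partial\mathbf{G}\backslash\partial\Lambda_f$ contains an edge corner of $\Lambda_g$ with vertex $\mathbf{x}_c^g$; the complementary case is symmetric. In a neighborhood of $\mathbf{x}_c^g$ the boundary $\partial\mathbf{G}$ coincides with the two faces of $\Lambda_g$ meeting there, so $u$ satisfies $\partial_\nu u+\eta_g u=0$ on these faces. Since $\mathbf{x}_c^g\notin\Lambda_f$, the vertex lies in the open set $\Omega_f$, so $u=u_f$ is a Helmholtz solution that extends real-analytically to a full neighborhood of $\mathbf{x}_c^g$. The crux is then to invoke the generalized geometric vanishing property of Laplacian eigenfunctions at a rational corner (extending \cite[Theorem~3.1]{CDL3}): expanding $u$ about $\mathbf{x}_c^g$ in spherical coordinates and imposing the two impedance relations on the rational faces, the degree $p\geq 3$ forces the first-order homogeneous term of $u$ at $\mathbf{x}_c^g$ to vanish—crucially \emph{without} any assumption on the value $u(\mathbf{x}_c^g)$—so that $\nabla u(\mathbf{x}_c^g)=0$, whence $\mathcal{L}(\nabla u)(\mathbf{x}_c^g)=0$. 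This contradicts the hypothesis $\mathcal{L}(\nabla u_g)(\mathbf{x}_c^g)\neq 0$ in \eqref{equi-cond2}. The symmetric reasoning excludes edge corners of $\Lambda_f$ lying on $\partial\mathbf{G}\backslash\partial\Lambda_g$.

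Having matched the corner structure, I would recover the impedance: on $\Lambda_f\cap\Lambda_g$ both boundary conditions hold, so $(\eta_f-\eta_g)\,u=0$ there, and since $u$ carries non-trivial Cauchy data it cannot vanish on a relatively open portion of the boundary (by unique continuation together with the non-degeneracy $\mathcal{L}(\nabla u)\neq 0$), whence $\eta_f=\eta_g$ on $\Lambda_f\cap\Lambda_g$. The main obstacle is precisely the central step: establishing that the first-order term vanishes at a rational corner of degree $\geq 3$ without the a-priori condition $u(\mathbf{x}_c)=0$ used in \cite{CDL3}. The hypotheses ${\sf deg}\geq 3$ and $\mathcal{L}(\nabla u)\neq 0$ are calibrated exactly so that the associated-Legendre recurrence generated by matching the homogeneous components of $u$ against the impedance expansion \eqref{vari-eta0}, combined with the real-analytic extension from the $\Omega_f$-side, closes the contradiction.
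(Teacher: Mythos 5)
Your proposal is correct and follows essentially the same route as the paper: the paper likewise establishes $u_f=u_g$ in $\mathbf{G}$ (via the argument of Theorem \ref{thm-grat0}), then at a hypothetical rational edge corner of degree $\geq 3$ on $\partial\mathbf{G}$ applies Theorem \ref{main-vani} — which is exactly the ``generalized vanishing property without the condition $u(\mathbf{x}_c)=0$'' that you identify as the crux, and which is already proved in Section \ref{sec2} — to conclude $u(\mathbf{x}_c)=0$ and $\nabla u(\mathbf{x}_c)=0$, contradicting the non-degeneracy condition in \eqref{equi-cond2}, and it recovers $\eta_f=\eta_g$ by the same $(\eta_f-\eta_g)u=0$ plus Holmgren absurdity. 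The only cosmetic differences are that the paper places the offending corner on $\Lambda_f$ rather than $\Lambda_g$ (the roles are symmetric), and after Holmgren it draws the final contradiction from the Rayleigh expansion and Lemma \ref{ortho} rather than from the condition $\mathcal{L}(\nabla u)\neq 0$ itself.
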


\subsection{Background and discussion}

Determining an impenetrable obstacle by a minimal/optimal number of scattering measurements is a long-standing problem in the inverse scattering theory. We refer to \cite{CK18,Isa2,LiuZou08} for historical accounts as well as surveys on some existing developments in the literature. This problem has been resolved if a-priori geometric conditions are imposed on the underlying obstacle, say e.g. smallness in size (compared to the wavelength), radial symmetry or polyhedral shapes. We refer to \cite{AR,CDL2,CDL3,CY,DLW20,DLW21,DLZ21,EY2,EY3,Liua,Liu3,LPRX,LRX,LX,Liu-Zou,Liu-Zou3,LiuZou08,LiuZou07,Ron1,Ron2,Ron3} and the references cited therein for rich results on this intriguing topic. In two recent papers \cite{CDL2,CDL3}, a different perspective was proposed and the uniqueness study for the inverse scattering problem is delicately connected to the geometric properties of Laplacian eigenfunctions in certain specific setups. Within such a framework, one can establish the unique determination of polyhedral obstacles of impedance type by at most a few far-field measurements that were unable to be tackled by other means developed in the previous studies. The corresponding studies have been extended to solving a large class inverse scattering problems associated with electromagnetic and elastic waves as well \cite{DLW20, DLW21, DLZ21}. As also discussed in Section 1.1, we derive in this paper novel uniqueness results which extends the related results in \cite{CDL3} by relaxing certain technical conditions. This is made possible by exploiting the geometric structures of Laplacian eigenfunctions in a less restrictive setup compared to that in \cite{CDL3}. 

There are also rich results on the unique determination of periodic structures. It is known that in general a grating profile can be uniquely identified by infinitely many quasi-periodi incident plane waves with a fixed phase-shit \cite{Am, Kir94}. In \cite{HK}, it is shown that uniqueness by a finite number of incident plane waves could be attained under the Dirichlet boundary condition if some a-priori information about the height of the grating curve is known in $\R^2$. The global uniqueness can also be established for the inverse scattering with a finite number of incident plane waves if the grating profiles are piecewise linear; see \cite{BZZ, BZZ2, ESY, EY}. Recently, in \cite{CDL2}, the unique recovery for the inverse diffraction grating with generalized impedance boundary condition (including the Dirichlet and Neumann boundary conditions) has been proved in a unified way by at most two far-field measurements under some mild assumptions. In this paper, we establish the unique identifiability of a periodical diffraction grating with impedance boundary condition in $\R^3$ by a single far-field measurement, which makes a significant progress compared with the existing results.

The rest of this paper is organized as follows. In Section \ref{sec2}, we present the extension and generalization of our results in \cite{CDL3} on the geometric structures of Laplacian eigenfunctions. Section \ref{sec3} is devoted to the proofs of the unique identifiability results for the inverse scattering problems.

\section{Geometric properties of Laplacian eigenfunctions.}\label{sec2}


In \cite{CDL3}, certain geometric properties of Laplacian eigenfunctions in $\R^3$ were investigated. Specifically, we studied the cases for edge corners and vertex corners respectively and derived a rigorous characterization of the relationship between  the analytic behaviour of Laplacian eigenfunctions at the underlying corner point and the geometric quantities of that corner. {In fact, in the edge corner case,} the vanishing order of the eigenfunction is related to the rationality of the intersecting dihedral angle, 
whereas {in the vertex corner case}, 
the vanishing order of the eigenfunction follows a more complicated manner through the roots of the Legendre polynomials. In this section, as an extension of the study in \cite{CDL3}, we establish the vanishing properties of the Laplacian eigenfunction by getting rid of the technical condition that $u(\mathbf{ x}_c)=0$ at the intersecting point as well as relaxing the technical restrictions associated with the Legendre polynomials.

Let  $\Omega$ be an open set in $\mathbb{R}^3$. Consider $u\in L^2(\Omega)$ and $\lambda\in\mathbb{R}_+$ such that
\begin{equation}\label{eq:eig}
-\Delta u=\lambda u. 
\end{equation}
{The solution $u$ to} \eqref{eq:eig} is referred to as a (generalized) Laplacian eigenfunction. 

First, we introduce some notifications for the subsequent use. Let $\Pi$ be a flat plane in $\R^3$. For any non-empty connected open subset $\Sigma\Subset\Pi$, it is said to be a cell of $\Pi$. Denote $\Pi_\Sigma$ to be the connected component of $\Pi\cap\Omega$ which contains $\Sigma$. 

\begin{definition}\label{def-gene}
	Let $\Sigma$ be a cell of $\Pi$ and $\eta\in L^\infty(\Pi)$. Consider a Laplacian eigenfunction $u$ to \eqref{eq:eig}. If $\partial_\nu u+\eta u=0$, where $\nu$ denotes the unit normal vector that is perpendicular to $\Pi$, then $\Sigma$ is said to be the generalized singular cell in $\Omega$ and $\Pi$ is called the generalized singular plane. In particular, if $\eta\equiv0$, a generalized singular plane is called a singular plane and if $\eta=\infty$, a generalized singular plane is also called a nodal plane, which fulfills that $u|_{\Pi}=0$.
\end{definition}

Next, we follow Definition 1.4 and Definition 1.5 in \cite{CDL3} to present the precise definition for vanishing orders of the Laplacian eigenfunction $u$ at a given point associated with an edge corner or a vertex corner.

\begin{definition}\label{def-vani}
	Let $u$ be a Laplacian eigenfunction to \eqref{eq:eig}. For a given point $\mathbf{ x}_0\in\Omega$, if there exists a number $N\in\N\cup\{0\}$ such that
	\begin{equation}\label{eq-vani}
		\lim_{\rho\rightarrow0+}\frac{1}{\rho^m}\int_{B_\rho(\mathbf{x}_0)}|u(\mathbf{ x})|\,{\rm d}\mathbf{ x}=0 \quad\mbox{for}\quad m=0,1,\cdots,N+2,
	\end{equation}
	where $B_\rho(\mathbf{ x}_0)$ is a ball centered at $\mathbf{ x}_0$ with radius $r\in\R_+$, we say that $u$ vanishes at $\mathbf{ x}_0$ up to the order $N$. The largest possible $N$ such that \eqref{eq-vani} holds is called the vanishing order of $u$ at $\mathbf{ x}_0$ and we denote
	\begin{equation}\notag
		{\rm Vani}(u;\mathbf{ x}_0)=N.
	\end{equation}
	If \eqref{eq-vani} holds for any $N\in\N$ at $\mathbf{ x}_0\in\Pi$, then we say that the vanishing order at $\mathbf{ x}_0$ is infinity.
	
	In particular, if \eqref{eq-vani} is fulfilled for any given point $\mathbf{ x}_0\in\bsl$ associated with an edge corner $\mathcal{E}(\Pi_{1}, \Pi_2, \bsl)\Subset\Omega$, then we say that $u$ vanishes at $\mathbf{ x}_0$ associated with the edge corner $\mathcal{E}(\Pi_{1}, \Pi_2, \bsl)$ up to the order $N$ which is denoted as
	\begin{equation}\notag
		\rm{Vani}(u; \mathbf x_0, \Pi_{1}, \Pi_2)=N.
	\end{equation}	
For a vertex corner $\mathbf{ x}_0\in \Omega$ which is intersected by $\Pi_i$, $i=1,2,...n$, the vanishing order of $u$ at $\mathbf{ x}_0$ is defined by
\begin{equation*}\label{def:3d plane}
\mathrm{Vani}(u; \mathbf{ x}_0) :=\max\big\{\max_{i=1,2,...n-1}\mathrm{Vani}(u; \bfx_0,\Pi_i, \Pi_{i+1}), \mathrm{Vani}(u; \bfx_0, \Pi_n, \Pi_1)\big\}. 
\end{equation*}
\end{definition}

With the above definitions, we are now in a position to investigate the vanishing properties of the Laplacian eigenfunction at corners intersected by at least two generalized singular planes. Since $-\Delta$ is invariant under rigid motions, we can assume that the edge corner $\mathcal{E}(\Pi_1, \Pi_2, \bsl)$ fulfills
\begin{equation}\label{notation-10}
\bsl=\big\{~\bfx=(\bfx',x_3)\in \R^3; \bfx'=0,\ x_3\in (-H, H)\big\}\Subset\Omega,
\end{equation}
for $H\in \R^+$ throughout the rest of our paper. Indeed, this indicates that the edge corner coincides with the $x_3$-axis. For simplification, we further assume that $\Pi_{1}$ coincides with the $(x_1, x_3)-$plane and $\Pi_2$ possesses a dihedral angle of $\alpha\cdot\pi$ away from $\Pi_{1}$ in the anti-clockwise direction. The considering point $\mathbf{ x}_0\in\bsl$ is assumed to be located at the origin $\mathbf{ x}_0=\mathbf{0}$. Similar to \cite{CDL3}, with the help of analytic continuation property for $u$, we can restrict our discussion to $\alpha\in (0,1)$.

In order to utilize the spherical wave expansion method to discuss  the vanishing properties of $u$, we first introduce several propositions and lemmas based on the spherical coordinates in $\R^3$. For any point $\mathbf{ x}\in\R^3$, we denote
\begin{equation}\label{eq-coordi}
\mathbf{ x}=(x_1,x_2,x_3)=(r\sin\theta\cos\phi,r\sin\theta\sin\phi,r\cos\theta):=(r,\theta,\phi),
\end{equation}
where $r\geq0, \theta\in[0,\pi)$ and $\phi\in[0,2\pi)$. It is known that the Laplacian eigenfunction $u$ possesses the spherical wave expansion as follows.

\begin{lemma}\cite[Section 3.3]{CK}\label{expansion}
	The solution $u$ to \eqref{eq:eig} has the spherical wave expansion in spherical coordinates around the origin:
	\begin{equation}\label{expan}
	u(\mathbf{ x})=4\pi\sum_{n=0}^{\infty}\sum_{m=-n}^{n}\bsi^na_n^mj_n(\sqrt{\lambda}r)Y_n^m(\theta, \phi),
	\end{equation}
	where \mm{$j_n(\sqrt{\lambda}r)$ is the spherical Bessel function of order $n$, and} 
	$Y_n^m(\theta,\phi)$ is the spherical harmonics given by 
	\begin{equation*}\label{sphe harmonic}
	Y_n^m(\theta,\phi)=\sqrt{\frac{2n+1}{4\pi}\frac{(n-|m|)!}{(n+|m|)!}}P_n^{|m|}(\cos\theta)e^{\bsi m\phi}
	\end{equation*}
	\mm{with $P_n^m(\cos\theta)$ being the associated Legendre functions.}
\end{lemma}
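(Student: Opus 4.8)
The plan is to derive \eqref{expan} by separation of variables in spherical coordinates combined with the completeness of the spherical harmonics on $\mathbb{S}^2$. As a starting point, I would use the interior analyticity of solutions to \eqref{eq:eig}: since $-\Delta u=\lambda u$ is elliptic with constant coefficients, $u$ is real-analytic in $\Omega$, hence $C^\infty$ in a ball $B_R(\mathbf{0})\Subset\Omega$. Thus for each fixed $r\in(0,R)$ the angular profile $(\theta,\phi)\mapsto u(r,\theta,\phi)$ is smooth on the unit sphere, and since $\{Y_n^m\}_{n\geq 0,\,|m|\leq n}$ is a complete orthonormal system in $L^2(\mathbb{S}^2)$, one may write the fibrewise expansion
\begin{equation}\notag
u(r,\theta,\phi)=\sum_{n=0}^{\infty}\sum_{m=-n}^{n}c_n^m(r)Y_n^m(\theta,\phi),\qquad c_n^m(r)=\int_{\mathbb{S}^2}u(r,\theta,\phi)\overline{Y_n^m(\theta,\phi)}\,{\rm d}\sigma.
\end{equation}
The smoothness of $u$ forces each $c_n^m(r)$ to decay faster than any power of $n$, locally uniformly in $r$, so that the series and its term-by-term first two derivatives converge locally uniformly.

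Next, I would insert this expansion into the Helmholtz equation written in spherical coordinates,
\begin{equation}\notag
\frac{1}{r^2}\partial_r\!\left(r^2\partial_r u\right)+\frac{1}{r^2}\Delta_{\mathbb{S}^2}u+\lambda u=0,
\end{equation}
and invoke the eigenrelation $\Delta_{\mathbb{S}^2}Y_n^m=-n(n+1)Y_n^m$. Testing against $Y_n^m$ and using orthonormality, each coefficient is seen to satisfy the radial ODE
\begin{equation}\notag
(c_n^m)''+\frac{2}{r}(c_n^m)'+\left(\lambda-\frac{n(n+1)}{r^2}\right)c_n^m=0,\qquad r\in(0,R),
\end{equation}
which, after the substitution $\rho=\sqrt{\lambda}\,r$, is the spherical Bessel equation of order $n$; its solution space is spanned by $j_n(\sqrt{\lambda}\,r)$ and the spherical Bessel function of the second kind $y_n(\sqrt{\lambda}\,r)$.

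It then remains to discard the singular branch. Because $u$ is smooth, and hence bounded, near the origin, every $c_n^m(r)$ must remain bounded as $r\to 0+$, whereas $y_n(\sqrt{\lambda}\,r)$ behaves like $r^{-(n+1)}$ and blows up. Consequently $c_n^m(r)=\gamma_n^m\,j_n(\sqrt{\lambda}\,r)$ for some constant $\gamma_n^m$, and relabelling $\gamma_n^m=4\pi\bsi^n a_n^m$ produces exactly \eqref{expan}.

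The step demanding the most care is the rigorous justification of the term-by-term differentiation used to extract the radial ODE: one must verify that the coefficient functions $c_n^m(r)$ are genuinely twice differentiable and that the differentiated series converge, which is precisely where the analyticity of $u$ — equivalently, the rapid decay of the spherical-harmonic coefficients of a smooth function on $\mathbb{S}^2$ — is indispensable. Once the ODE is legitimately obtained, the rest reduces to the classical identification of its regular solution with $j_n$, and the expansion follows; this is the route taken in \cite[Section 3.3]{CK}.
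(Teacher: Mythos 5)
Your proof is correct, and in fact the paper offers no proof of this lemma at all: it is quoted directly from \cite[Section 3.3]{CK}, and your separation-of-variables argument (spherical-harmonic projection, the radial spherical Bessel ODE, and discarding the singular branch $y_n$ by boundedness of the coefficients at the origin) is precisely the standard derivation given in that reference. As a minor simplification, the term-by-term differentiation you flag as the delicate step can be avoided entirely: differentiate $c_n^m(r)=\int_{\mathbb{S}^2}u(r,\cdot)\,\overline{Y_n^m}\,{\rm d}\sigma$ under the integral sign and use the self-adjointness of $\Delta_{\mathbb{S}^2}$ to move the spherical Laplacian onto $Y_n^m$, so each coefficient is seen to satisfy the radial ODE without any manipulation of infinite series.
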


In particular, the associated Legendre polynomials $P_n^m$ fulfill the following orthogonal relation.

\begin{lemma}\cite[Theorem 2.4.4]{Ned}\label{base2}
	\mm{In the spherical coordinate system, the associated Legendre functions fulfill the following orthogonality condition 
	for any fixed $n \in \mathbb N$, and any two integers $m\ge 0$ and $l\leq n$:}
	\begin{equation*}\label{ortho3}
	\int_{-\pi}^{\pi}\frac{P_n^m(\cos\theta)P_n^l(\cos\theta)}{\sin\theta}\,d\theta=
	\left\{  
	\begin{array}{cc}
	0&\mbox{ if }\quad l\neq m\\
	\frac{(n+m)!}{m(n-m)!}&\mbox{ if }\quad l=m
	\end{array}
	\right. .
	\end{equation*}
\end{lemma}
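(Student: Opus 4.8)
The plan is to prove the stated relation as the order-orthogonality (in $m$, for a fixed degree $n$) of the associated Legendre functions, which is governed by a singular Sturm--Liouville structure. The first move is to pass from $\theta$ to $x=\cos\theta$ on the physical range $\theta\in(0,\pi)$: since $\mathrm{d}x=-\sin\theta\,\mathrm{d}\theta$ and $1-x^2=\sin^2\theta$, the weighted integral in the lemma is transformed into
\[
\int_{0}^{\pi}\frac{P_n^m(\cos\theta)\,P_n^l(\cos\theta)}{\sin\theta}\,\mathrm{d}\theta=\int_{-1}^{1}\frac{P_n^m(x)\,P_n^l(x)}{1-x^2}\,\mathrm{d}x,
\]
so that the role of the singular weight $1/(1-x^2)$ becomes explicit (the extension to the symmetric interval written in the statement follows the convention of \cite{Ned}). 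Everything then reduces to computing the right-hand integral.

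For the off-diagonal case $l\neq m$ I would use that each $P_n^m$ solves the associated Legendre equation, which in self-adjoint form reads $\frac{\mathrm{d}}{\mathrm{d}x}\!\big[(1-x^2)y'\big]+n(n+1)\,y=\frac{m^2}{1-x^2}\,y$, i.e.\ a Sturm--Liouville eigenvalue problem with weight $1/(1-x^2)$ and eigenvalue $m^2$ at fixed $n$. Writing this identity for $P_n^m$ and for $P_n^l$, multiplying the first by $P_n^l$, the second by $P_n^m$, subtracting and integrating over $(-1,1)$, the left-hand side collapses to the boundary term $\big[(1-x^2)\big(P_n^l\,(P_n^m)'-P_n^m\,(P_n^l)'\big)\big]_{-1}^{1}$, which vanishes because the prefactor $(1-x^2)$ tends to $0$ at $x=\pm1$ while the factors stay controlled there (indeed $P_n^j(\pm1)=0$ for $j\geq1$). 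What remains is $(m^2-l^2)\int_{-1}^{1}P_n^m P_n^l/(1-x^2)\,\mathrm{d}x$, and since $m^2\neq l^2$ the integral must be zero. This structural part is routine once the self-adjoint form is in hand.

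The genuinely laborious step, and the one I expect to be the main obstacle, is the diagonal normalization $l=m$, where the Sturm--Liouville argument supplies no constant. I would evaluate $\int_{-1}^{1}(P_n^m)^2/(1-x^2)\,\mathrm{d}x$ by a direct computation, using the ladder recurrences that connect $P_n^m$ with $P_n^{m\pm1}$ (equivalently the Rodrigues representation of $P_n^m$) to trade the singular weight $1/(1-x^2)$ for unweighted combinations of associated Legendre functions, and then invoking the standard normalization $\int_{-1}^{1}(P_n^m)^2\,\mathrm{d}x=\tfrac{2}{2n+1}\tfrac{(n+m)!}{(n-m)!}$ together with the relevant cross-integrals. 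Careful tracking of the factorial coefficients then produces the constant $(n+m)!/(m(n-m)!)$. The factor $m$ in the denominator makes transparent the implicit restriction $m\geq1$ in the diagonal case: for $m=0$ one has $P_n^0(\pm1)=1$, so the integrand behaves like $1/\sin\theta$ near the poles and the integral diverges. Reversing the substitution $x=\cos\theta$ finally returns the identity as stated in Lemma~\ref{base2}.
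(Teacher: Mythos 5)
The paper contains no internal proof of this lemma to compare against: it is quoted verbatim as Theorem 2.4.4 of N\'ed\'elec \cite{Ned}, and is invoked later (in the proof of Theorem \ref{main-vani}) only through the facts that distinct orders are orthogonal and the diagonal constant is finite and nonzero. Your proposal therefore supplies a self-contained substitute, and its architecture is the standard one: substitute $x=\cos\theta$, prove off-diagonal vanishing by the Sturm--Liouville/Green identity with $m^2$ playing the eigenvalue against the singular weight $1/(1-x^2)$ at fixed degree $n$, and compute the diagonal constant separately. The off-diagonal half is correct, but tighten one point: near $x=\pm1$ one has $P_n^j(x)\sim c_j(1-x^2)^{j/2}$, so for $j=1$ the derivative $(P_n^1)'\sim(1-x^2)^{-1/2}$ is unbounded, and ``the factors stay controlled'' is not literally true. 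The boundary term still vanishes because $(1-x^2)\,P_n^l\,(P_n^m)'=O\bigl((1-x^2)^{(l+m)/2}\bigr)$ with $l+m\geq1$ off the diagonal; the same asymptotics also give convergence of the weighted integral, which you should state explicitly. Two of your side remarks are genuinely valuable: the diagonal formula forces $m\geq1$ (for $m=l=0$ the integral diverges, consistent with the factor $m$ in the denominator), and the integration range must be $(0,\pi)$ --- as printed, with $\int_{-\pi}^{\pi}$ and an integrand odd in $\theta$, the identity cannot hold, so the lemma as stated contains a misprint that your substitution silently corrects.

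The only genuine shortfall is the diagonal case, which is precisely the part that produces $(n+m)!/(m(n-m)!)$: you describe a strategy but do not execute it, and the step you defer is less routine than the phrase ``the relevant cross-integrals'' suggests, because associated Legendre functions of the same degree and different orders are \emph{not} orthogonal in unweighted $L^2(-1,1)$ --- that is exactly why the weight $1/(1-x^2)$ appears in this lemma --- so trading the weight for unweighted products via the ladder identities leaves nonzero cross terms that must be evaluated by hand. A cleaner execution is induction on $m$ at fixed $n$: the base case follows from $P_n^1=\pm\sqrt{1-x^2}\,P_n'$ (the phase convention is irrelevant after squaring), whence $\int_{-1}^{1}(P_n^1)^2(1-x^2)^{-1}\,dx=\int_{-1}^{1}(P_n')^2\,dx=\bigl[P_nP_n'\bigr]_{-1}^{1}=n(n+1)$, since $\int_{-1}^{1}P_nP_n''\,dx=0$ by degree counting; this matches $(n+1)!/(n-1)!$. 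For the inductive step one verifies, using a raising identity such as $P_n^{m+1}=\sqrt{1-x^2}\,(P_n^m)'+mx(1-x^2)^{-1/2}P_n^m$ together with the associated Legendre equation, that the ratio of consecutive diagonal integrals equals $(n+m+1)(n-m)\,m/(m+1)$, which telescopes to the stated constant. With the diagonal case completed along these lines (or simply cited from \cite{Ned}, as the paper does), your argument is a correct proof of the lemma.
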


Furthermore, from \cite{WB}, we know that there holds the recursive relations for $P_n^{m}$ as follows.

\begin{lemma}\label{recur-P}
	In the spherical coordinate system, the associated Legendre functions $P_n^{m}$ fulfills the following recursive equations for any fixed $n, m \in \mathbb N$:
	\begin{equation}\label{recur-p1}
	\frac{dP_n^{|m|}(\cos\theta)}{d\theta}=\frac{1}{2}\left((n+|m|)(n-|m|+1)P_n^{|m|-1}-P_n^{|m|+1}\right),
	\end{equation}
	and 
	\begin{equation}\label{recur-p2}
	\frac{P_n^{|m|}(\cos\theta)}{\sin\theta}=-\frac{1}{2|m|}\left(P_{n+1}^{|m|+1}(\cos\theta)+(n-|m|+1)(n-|m|+2)P_{n+1}^{|m|-1}(\cos\theta)\right),
	\end{equation}
	which can be also represented as 
	\begin{equation}\label{recur-pn}
	\frac{P_n^{|m|}(\cos\theta)}{\sin\theta}=-\frac{1}{2|m|}\left(P_{n-1}^{|m|+1}(\cos\theta)+(n+|m|-1)(n+|m|)P_{n-1}^{|m|-1}(\cos\theta)\right),
	\end{equation}
	with slight changes of the lower index.
\end{lemma}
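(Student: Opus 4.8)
The plan is to derive all three identities from the classical three-term recurrence relations satisfied by the associated Legendre functions $P_n^{|m|}(\cos\theta)$, working in the variable $x=\cos\theta$ so that $\sqrt{1-x^2}=\sin\theta$ and $\frac{d}{d\theta}=-\sin\theta\,\frac{d}{dx}$. The building blocks I would start from are all standard and available in \cite{WB}: the order recurrence relating $P_n^{|m|+1}$, $P_n^{|m|}$ and $P_n^{|m|-1}$ at fixed degree; the degree recurrence $(2n+1)xP_n^{|m|}=(n-|m|+1)P_{n+1}^{|m|}+(n+|m|)P_{n-1}^{|m|}$; the derivative recurrence $2\sqrt{1-x^2}\,\frac{dP_n^{|m|}}{dx}=P_n^{|m|+1}-(n+|m|)(n-|m|+1)P_n^{|m|-1}$; and the mixed relations that express $\sqrt{1-x^2}\,P_n^{|m|+1}$ through $P_n^{|m|}$ and $P_{n\mp1}^{|m|}$.

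The first identity \eqref{recur-p1} is then immediate. The derivative recurrence gives $\sqrt{1-x^2}\,\frac{dP_n^{|m|}}{dx}=\frac{1}{2}\bigl(P_n^{|m|+1}-(n+|m|)(n-|m|+1)P_n^{|m|-1}\bigr)$, and converting $\frac{d}{dx}$ to $\frac{d}{d\theta}$ via $\frac{d}{d\theta}=-\sin\theta\,\frac{d}{dx}$ (recall $\sqrt{1-x^2}=\sin\theta$) introduces exactly the sign flip that turns this into $\frac{dP_n^{|m|}}{d\theta}=\frac{1}{2}\bigl((n+|m|)(n-|m|+1)P_n^{|m|-1}-P_n^{|m|+1}\bigr)$, which is \eqref{recur-p1}.

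For \eqref{recur-p2} I would isolate $\frac{P_n^{|m|}}{\sin\theta}$ as follows. Writing the degree recurrence at orders $|m|+1$ and $|m|-1$ and combining the two with the weights dictated by the order recurrence, the left-hand side collapses to a multiple of $\frac{x^2}{\sqrt{1-x^2}}P_n^{|m|}$, while the right-hand side is a linear combination of the four functions $P_{n+1}^{|m|\pm1}$ and $P_{n-1}^{|m|\pm1}$. Substituting $\frac{x^2}{\sqrt{1-x^2}}=\frac{1}{\sqrt{1-x^2}}-\sqrt{1-x^2}$ exposes the target term $\frac{P_n^{|m|}}{\sqrt{1-x^2}}$, and then the mixed recurrences are used to eliminate both the spurious $\sqrt{1-x^2}\,P_n^{|m|}$ term and the degree-$(n-1)$ contributions. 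After collecting coefficients this should reduce to $-2|m|\frac{P_n^{|m|}}{\sqrt{1-x^2}}=P_{n+1}^{|m|+1}+(n-|m|+1)(n-|m|+2)P_{n+1}^{|m|-1}$, which is \eqref{recur-p2}. Finally, \eqref{recur-pn} is equivalent to \eqref{recur-p2}: since both of their right-hand sides must equal $-2|m|\frac{P_n^{|m|}}{\sin\theta}$, the degree-$(n+1)$ combination $P_{n+1}^{|m|+1}+(n-|m|+1)(n-|m|+2)P_{n+1}^{|m|-1}$ necessarily coincides with the degree-$(n-1)$ combination $P_{n-1}^{|m|+1}+(n+|m|-1)(n+|m|)P_{n-1}^{|m|-1}$, and this coincidence -- obtainable directly by subtracting the mixed recurrences -- is exactly the ``slight change of the lower index'' quoted in the statement.

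The main obstacle I expect is purely the coefficient bookkeeping in the derivation of \eqref{recur-p2}: keeping the products $(n\pm|m|)(n\mp|m|+\cdots)$ consistent through the combination step and through the elimination of the $P_{n-1}^{|m|\pm1}$ terms, where a single misplaced factor derails the cancellation. A second, more subtle point is fixing the normalization convention used for $P_n^{|m|}$ in \cite{WB}, in particular whether the Condon--Shortley phase $(-1)^{|m|}$ is included, since the signs in the mixed recurrences and hence the final signs in \eqref{recur-p1}--\eqref{recur-pn} depend on it. Because these identities are classical, the route I would actually take for a fully rigorous write-up is simply to invoke them from \cite{WB}, or to verify them by induction on $n$ using the Rodrigues formula, rather than reproving the three-term recurrences from scratch.
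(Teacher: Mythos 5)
The first thing to compare against is what the paper actually does with this lemma: nothing. Lemma \ref{recur-P} is introduced with the words ``from \cite{WB}, we know that there holds the recursive relations\dots'' and is never proved; it is a direct quotation from Bosch's paper. So your closing fallback --- simply invoking \cite{WB} --- \emph{is} the paper's own treatment, and everything else in your proposal is content the paper does not contain. Of that extra content, the derivation of \eqref{recur-p1} from the derivative recurrence together with $\frac{d}{d\theta}=-\sin\theta\,\frac{d}{dx}$ is correct and complete, and your argument that \eqref{recur-pn} is equivalent to \eqref{recur-p2} is also right: subtracting the two mixed recurrences $(2n+1)\sqrt{1-x^2}\,P_n^m=P_{n+1}^{m+1}-P_{n-1}^{m+1}$ and $(2n+1)\sqrt{1-x^2}\,P_n^m=(n+m)(n+m-1)P_{n-1}^{m-1}-(n-m+1)(n-m+2)P_{n+1}^{m-1}$ gives exactly the coincidence of the two right-hand sides. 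The middle step (deriving \eqref{recur-p2} itself) is left schematic, but it is the standard computation.

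However, your ``subtle point'' about the Condon--Shortley phase is not a mere bookkeeping worry --- it is precisely where the printed statement fails, and a careful execution of your plan will expose this rather than reproduce the lemma verbatim. Test $n=m=1$. In the convention of \cite{CK} (the one the paper adopts through Lemma \ref{expansion}), $P_1^1(\cos\theta)=\sin\theta$, $P_2^2(\cos\theta)=3\sin^2\theta$, $P_2^0(\cos\theta)=\tfrac12(3\cos^2\theta-1)$: then \eqref{recur-p1} checks out ($\cos\theta$ on both sides), but the right-hand side of \eqref{recur-p2} equals $-\tfrac12\left(3\sin^2\theta+3\cos^2\theta-1\right)=-1$ while the left-hand side equals $+1$. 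With the Condon--Shortley phase ($P_1^1=-\sin\theta$) the situation reverses: \eqref{recur-p2} and \eqref{recur-pn} hold as printed, but \eqref{recur-p1} acquires the wrong sign. Hence no single normalization makes all three identities true simultaneously; in the \cite{CK} convention the factor $-\frac{1}{2|m|}$ in \eqref{recur-p2}--\eqref{recur-pn} must be $+\frac{1}{2|m|}$. (One can see the corrected sign directly: $P_n^1(\cos\theta)/\sin\theta=P_n'(\cos\theta)\to P_n'(1)=n(n+1)/2>0$ as $\theta\to0$, whereas \eqref{recur-p2} as printed forces the limit $-n(n+1)/2$, which is what the paper records in \eqref{mid-p2} and feeds into the determinants in the proof of Theorem \ref{main-vani}.) So when you carry out the coefficient bookkeeping you should not aim to ``reduce to \eqref{recur-p2}'' with the printed sign: done consistently, your derivation will yield the sign-corrected version, and that is the version that should be stated and used.
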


We proceed to present our main theorem regarding the theoretical study of the vanishing properties of Laplacian eigenfunctions at any edge corner intersected by two generalized singular planes. For simplification in the subsequent study, we assume the impedance boundary parameter $\eta$ to be a nonzero constant. Nevertheless, it is pointed out that the same results hold for any real analytic function $\eta$ with the form \eqref{vari-eta0} fulfilling that  $\alpha_{0}\neq0$, where  $\alpha_{0}$ is the constant part of \eqref{vari-eta0}; see Remark \ref{rem-eta func} for more relevant explanations.

\begin{theorem}\label{main-vani}
	Let $u$ be a Laplacian eigenfunction to \eqref{eq:eig}. Consider an edge corner ${\mathcal E}(\Pi_1, \Pi_2,\bsl)\Subset \Omega$ associated with two generalized singular planes $\Pi_{1}$ and $\Pi_2$. If 
	the corresponding dihedral angle can be written as
	\begin{equation}\notag
	\angle(\Pi_{1}, \Pi_2)=\phi_0=\alpha\cdot\pi,\quad \alpha\in(0,1),
	\end{equation}
	where $\alpha$ satisfies that for any $N\in\mathbb{N}$, $N\geq 2$,  
	\begin{equation}\label{angle}
	\alpha\neq \frac{q}{p}, \ \  p=1,2,\cdots, N-1, \  q=0, 1,2,\cdots, p-1,
	\end{equation}
	and the corresponding impedance parameters $\eta_i$  associated with $\Pi_i$, $i=1,2,$ fulfill that
	\begin{equation}\label{new-eta}
	2\eta_1\cos\phi_0+\eta_2(1+\cos2\phi_0)\neq0,	
	\end{equation}
	then $u$ vanishes up to the order at least $N$ at the edge corner $\mathbf 0$. 
\end{theorem}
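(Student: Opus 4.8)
The plan is to pass to the spherical wave expansion of Lemma~\ref{expansion} around the edge point $\mathbf 0$ and to reformulate ``$u$ vanishes to order at least $N$'' as the statement that every coefficient $a_n^m$ with $n\le N-1$ vanishes; since $j_n(\sqrt\lambda r)\sim r^n$, the lowest surviving spherical harmonic dictates the leading $r$-power, so this reformulation is equivalent to the claimed vanishing. After the normalization $\Pi_1=\{\phi=0\}$, $\Pi_2=\{\phi=\phi_0\}$ with $\phi_0=\alpha\pi$ already fixed in the excerpt, I would write each impedance condition $\partial_\nu u+\eta_i u=0$ using $\partial_\nu u=\pm\frac{1}{r\sin\theta}\partial_\phi u$ on a face $\phi=\mathrm{const}$, and organize the two resulting identities by powers of $r$. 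The structural fact driving everything is that the normal-derivative term lowers the $r$-order by one and inserts a factor $m$ (so it annihilates the axisymmetric $m=0$ part), thereby exposing the degree-$n$ coefficients $a_n^m$ at order $r^{\,n-1}$, whereas $\eta_i u$ preserves the order and exposes $a_{n-1}^m$.

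For the non-axisymmetric modes I would argue by induction on $n$, anchored at the base case $u(\mathbf 0)=0$ established in the last paragraph. Assuming $u$ vanishes to order $n-1$ (so $a_k^m=0$ for all $k\le n-1$), the $\eta_i u$ contributions drop out at order $r^{\,n-1}$ and the two boundary identities collapse, for all $\theta$, to
\[
\sum_{m=-n}^{n} m\,a_n^m c_n^m\,\frac{P_n^{|m|}(\cos\theta)}{\sin\theta}=0,\qquad
\sum_{m=-n}^{n} m\,a_n^m c_n^m\,\frac{P_n^{|m|}(\cos\theta)}{\sin\theta}\,e^{\bsi m\phi_0}=0,
\]
with $c_n^m$ the normalization constants appearing in $Y_n^m$. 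Using the recursion of Lemma~\ref{recur-P} to rewrite $P_n^{|m|}/\sin\theta$ through degree-$(n-1)$ associated Legendre functions, and then the orthogonality of Lemma~\ref{base2}, I would extract $a_n^m=a_n^{-m}$ from the first identity and $a_n^m e^{\bsi m\phi_0}=a_n^{-m}e^{-\bsi m\phi_0}$ from the second, whence $a_n^m\sin(m\alpha\pi)=0$. Since $1\le|m|\le n\le N-1$, the Diophantine hypothesis \eqref{angle} guarantees $\alpha\ne j/|m|$ for every integer $j$, so $\sin(m\alpha\pi)\ne 0$ and therefore $a_n^m=0$ for all $m\ne 0$.

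The factor $m$ renders these identities blind to the axisymmetric coefficients $a_n^0$, which must instead be reached through the $\eta_i u$ coupling. Once the $m\ne0$ modes are known to vanish, the surviving low-order part of $u$ is axisymmetric; its normal derivative vanishes identically on each face, so $\partial_\nu u+\eta_i u=0$ degenerates to $\eta_i u=0$ there, and since the constant part $\alpha_0$ of the impedance in \eqref{vari-eta0} is nonzero this forces the axisymmetric part to vanish on $\Pi_i$, killing the $a_n^0$ at the intermediate orders; the top axial coefficient is then recovered one order higher from the $P_n^2$-projection, which again reduces to the $m=\pm1$ computation and the factor $\sin\phi_0\ne0$. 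The genuinely delicate point, and the precise reason the hypothesis $u(\mathbf 0)=0$ of \cite{CDL3} can be dropped, is the very bottom mode $a_0^0=u(\mathbf 0)$: the azimuthal-derivative conditions cannot see it, and the $1/\sin\theta$-weighted orthogonality of Lemma~\ref{base2} degenerates at $m=0$, so it demands a separate non-degeneracy argument.

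Establishing $u(\mathbf 0)=0$ is where I expect the main obstacle to lie, and where \eqref{new-eta} enters. I would combine the order-$r^0$ relations, which read $\nu_i\cdot\nabla u(\mathbf 0)+\eta_i u(\mathbf 0)=0$ and express the in-plane gradient components $g_1,g_2$ of $\nabla u(\mathbf 0)$ linearly in terms of $u(\mathbf 0)$, with the coupled order-$r^1$ identities for the $m=\pm1$ sector; eliminating the first-order coefficients should produce a small linear system in which $u(\mathbf 0)$ survives with coefficient proportional to $2\eta_1\cos\phi_0+\eta_2(1+\cos 2\phi_0)=2\cos\phi_0\,(\eta_1+\eta_2\cos\phi_0)$, so that hypothesis \eqref{new-eta} is exactly the non-vanishing of this determinant and forces $u(\mathbf 0)=0$; the inductive cascade of the two preceding paragraphs then yields $a_n^m=0$ for all $n\le N-1$, i.e.\ vanishing order at least $N$. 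The hardest bookkeeping is to propagate the coupling between the bottom axisymmetric mode and the first azimuthal modes across two consecutive orders while using the higher Diophantine relations to prevent the degree-two coefficients $a_2^m$ from contaminating the relation, and to verify that the surviving determinant is indeed a nonzero multiple of the quantity in \eqref{new-eta}. Finally, the reduction from a general real-analytic impedance of the form \eqref{vari-eta0} to the constant case is routine, since the higher Taylor terms $\alpha_\ell^{(m)}r^\ell$ contribute only at strictly higher $r$-orders and leave every leading balance above unchanged.
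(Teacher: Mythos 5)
Your overall skeleton coincides with the paper's: spherical wave expansion around $\mathbf 0$ (Lemma \ref{expansion}), induction on the powers of $r$, the Legendre recursions of Lemma \ref{recur-P} combined with the orthogonality of Lemma \ref{base2}, and the mechanism $a_n^m\sin(m\phi_0)=0$ together with \eqref{angle} to kill the non-axisymmetric modes. The genuine gap is in your treatment of the axisymmetric coefficients, and it is not a bookkeeping issue but the central step. Your claim that, once the $m\neq0$ modes vanish, the impedance condition ``degenerates to $\eta_i u=0$'' on each face is false at every finite stage of the induction: at order $r^{n}$ the face identity couples the order-preserving term $\eta_i a_n^0$ with the order-lowering normal-derivative contribution of the degree-$(n+1)$ modes $a_{n+1}^{\pm1}$, which at that stage are \emph{not} yet known to vanish --- indeed, in your own scheme their vanishing presupposes $a_n^0=0$, so the argument is circular. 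Concretely, projecting the two face identities at order $r^n$ onto $P_n^0(\cos\theta)$ produces only \emph{two} linear equations in the \emph{three} unknowns $(a_{n+1}^{1},a_{n+1}^{-1},a_n^0)$. The same deficiency already occurs at your base case: the two relations $\nu_i\cdot\nabla u(\mathbf 0)+\eta_i u(\mathbf 0)=0$ merely express the in-plane gradient components through $u(\mathbf 0)$ and impose no constraint on $u(\mathbf 0)$ itself, and bringing in the order-$r^1$ identities, as you suggest, only introduces the further unknowns $a_2^{\pm1},a_1^0$ while remaining underdetermined by the same counting.

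What is missing is the paper's key device: a \emph{third} identity posed on the edge $\bsl$ itself, obtained by substituting the impedance condition of $\Pi_1$ into that of $\Pi_2$ and restricting to $\bsl$, namely $-\sin\phi_0\,\partial_{x_1}u+(\eta_1\cos\phi_0+\eta_2)u=0$ on $\bsl$ (equation \eqref{bsl0}). Expanded at $\theta=0$ through the $\theta$-derivative structure of the spherical harmonics (equations \eqref{bsl} and \eqref{rnl}), this supplies at every order one more linear equation in $(a_{n+1}^{1},a_{n+1}^{-1},a_n^0)$, and the resulting $3\times3$ determinant (see \eqref{r04} for the base case and the system built from \eqref{rnl}, \eqref{rn-sys}, \eqref{rn-sys2} in general) is computed in the paper to be a nonzero multiple of $\sin\phi_0\bigl(2\eta_1\cos\phi_0+\eta_2(1+\cos2\phi_0)\bigr)$. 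Two consequences you should register. First, hypothesis \eqref{new-eta} is invoked at \emph{every} order of the induction, to clear each pair $(a_{n+1}^{\pm1},a_n^0)$ --- after which the odd modes $a_{n+1}^{\pm3},a_{n+1}^{\pm5},\dots$ follow from the chain relations --- and not only once to get $u(\mathbf 0)=0$ as in your sketch. Second, the linear independence of this edge equation from the two face equations is exactly the delicate point of the whole proof: evaluated pointwise at the corner, the edge identity is an algebraic combination of the two face conditions, so its independence as an equation on the expansion coefficients is not automatic and must be (and in the paper is) established by the explicit determinant computation. Since your proposal never constructs this third identity, the induction cannot close, and the argument as written does not prove the theorem.
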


\begin{proof}
	Since $\Pi_{1}$ and $\Pi_2$ are two generalized singular planes such that $\Pi_{1}\cap\Pi_2=\bsl$, it is obvious that there holds
	\begin{equation}\label{orgi-pi}
		\frac{\partial u}{\partial\nu_1}+\eta_1u=0 \ \mbox{ on }\Pi_1,\quad \frac{\partial u}{\partial\nu_2}+\eta_2u=0 \ \mbox{ on }\Pi_2,
	\end{equation}
	where $\nu_i$, $i=1,2$, are the unit normal vectors perpendicular to $\Pi_{1}$ and $\Pi_2$, respectively. $\eta_1$ and $\eta_2$ are the corresponding impedance parameters. It is easy to know that $\nu_1=(0, -1, 0)$ and for any fixed $\Pi_2$, we denote $\nu_2=(-\sin\phi_0, \cos\phi_0, 0)$; seeing Figure \ref{fig:main-vani} for the schematic illustration in spherical coordinate system.
	
	\begin{figure}[h!]
		\centering
		\includegraphics[width=0.3\linewidth]{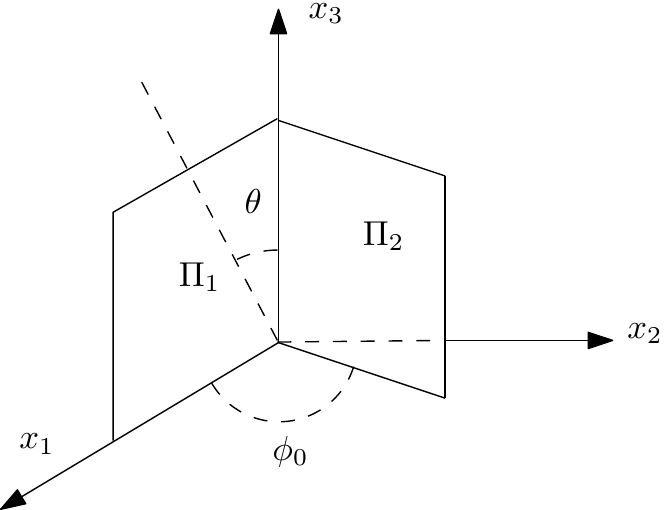}
		\caption{A schematic illustration for an edge corner with the dihedral angle $\phi_0$}
		\label{fig:main-vani}
	\end{figure}
	
	Thus, \eqref{orgi-pi} can be written more explicitly as
	\begin{align}
		\frac{\partial u}{\partial\nu_1}+\eta_1u&=-\frac{\partial u}{\partial x_2}+\eta_1u=0,&\mbox{on }\Pi_{1},\label{pi-21}\\
		\frac{\partial u}{\partial\nu_2}+\eta_2u&=-\frac{\partial u}{\partial x_1}\sin\phi_0+\frac{\partial u}{\partial x_2}\cos\phi_0+\eta_2u=0,&\mbox{on }\Pi_2\label{pi-22}.
	\end{align}
	Substituting \eqref{pi-22} into \eqref{pi-21}, we have that
	\begin{equation}\label{bsl0}
		-\frac{\partial u}{\partial x_1}\sin\phi_0+(\eta_1\cos\phi_0+\eta_2)u=0\quad\mbox{on}\quad \bsl=\Pi_{1}\cap\Pi_2.
	\end{equation}
	By utilizing the spherical coordinate system, there holds
	\begin{align}\label{change}
		\frac{\partial u}{\partial x_1}&=\frac{\partial u}{\partial r}\cdot\frac{\partial r}{\partial x_1}+\frac{\partial u}{\partial\theta}\cdot\frac{\partial \theta}{\partial x_1}+\frac{\partial u}{\partial \phi}\cdot\frac{\partial \phi}{\partial x_1}\notag\\
		&=\frac{\partial u}{\partial r}\sin\theta\cos\phi+\frac{\partial u}{\partial\theta}\cdot\frac{\cos\theta\cos\phi}{r}-\frac{\partial u}{\partial \phi}\cdot\frac{\sin\phi}{r\sin\theta}.
	\end{align}
	Then rewriting \eqref{bsl0} in terms of \eqref{change}, we can know that
	\begin{equation}\label{mid1}
		\left(-\frac{\partial u}{\partial r}\sin\theta\cos\phi-\frac{\partial u}{\partial \theta}\cdot\frac{\cos\theta\cos\phi}{r}+\frac{\partial u}{\partial \phi}\cdot\frac{\sin\phi}{r\sin\theta}\right)\cdot\sin\phi_0+(\eta_1\cos\phi_0+\eta_2)u=0\quad\mbox{on }\bsl.
	\end{equation}
	From \eqref{expan} in Lemma \ref{expansion}, which gives the spherical wave expansion of the Laplacian eigenfunction $u$, we can derive from \eqref{mid1} that
	\begin{align}\label{mid2}
	(&-4\pi\sum_{n=0}^\infty\sum_{m=-n}^n\bsi^n a_n^m \frac{dj_n(\sqrt{\lambda}r)}{dr}\sqrt{\frac{2n+1}{4\pi}}\sqrt{\frac{(n-|m|)!}{(n+|m|)!}}P_n^{|m|}(\cos\theta)e^{\bsi m \phi}\sin\theta\cos\phi\notag\\
	&-4\pi \sum_{n=0}^\infty\sum_{m=-n}^n\bsi^na_n^m j_n(\sqrt{\lambda}r)\sqrt{\frac{2n+1}{4\pi}}\sqrt{\frac{(n-|m|)!}{(n+|m|)!}}\frac{dP_n^{|m|}(\cos\theta)}{d\theta}e^{\bsi m \phi}\frac{\cos\theta\cos\phi}{r}\notag\\
	&+4\pi\sum_{n=0}^\infty\sum_{m=-n}^n\bsi^{n+1}m a_n^mj_n(\sqrt{\lambda}r)\sqrt{\frac{2n+1}{4\pi}}\sqrt{\frac{(n-|m|)!}{(n+|m|)!}}P_n^{|m|}(\cos\theta)e^{\bsi m \phi}\frac{\sin\phi}{r\sin\theta})\sin\phi_0\notag\\
	&+(\eta_1\cos\phi_0+\eta_2)4\pi\sum_{n=0}^\infty\sum_{m=-n}^n\bsi^n a_n^m j_n(\sqrt{\lambda}r)\sqrt{\frac{2n+1}{4\pi}}\sqrt{\frac{(n-|m|)!}{(n+|m|)!}}P_n^{|m|}(\cos\theta)e^{\bsi m \phi}=0.
	\end{align}
 Using the recursive equations \eqref{recur-p1} and \eqref{recur-p2} with respect to $P_n^{|m|}$ in Lemma \ref{recur-P},
	and by taking $\theta=0$ and $\phi=\phi_0$ on $\bsl$ in \eqref{mid2}, we can obtain that
	\begin{align}\label{mid3}
		(&-\sum_{n=0}^\infty\sum_{m=-n}^m \bsi^n a_n^m\frac{j_n{\sqrt{\lambda}}r}{r}\sqrt{\frac{2n+1}{4\pi}}\sqrt{\frac{(n-|m|)!}{(n+|m|)!}}\frac{dP_n^{|m|}(\cos\theta)}{d\theta}\big|_{\theta=0}e^{\bsi m \phi_0}\cos\phi_0\notag\\
		&+\sum_{n=0}^\infty\sum_{m=-n}^n\bsi^{n+1}m a_n^m\frac{j_n{\sqrt{\lambda}}r}{r}\sqrt{\frac{2n+1}{4\pi}}\sqrt{\frac{(n-|m|)!}{(n+|m|)!}}e^{\bsi m \phi_0}\sin\phi_0\frac{P_n^{|m|}(\cos\phi_0)}{\sin\phi_0}\big|_{\theta=0})\sin\phi_0\notag\\
		&+(\eta_1\cos\phi_0+\eta_2)\sum_{n=0}^\infty\sum_{m=-n}^n\bsi^na_n^mj_n(\sqrt{\lambda}r)\sqrt{\frac{2n+1}{4\pi}}\sqrt{\frac{(n-|m|)!}{(n+|m|)!}}P_n^{|m|}(1)e^{\bsi m \phi_0}=0,
	\end{align}
	where 
	\begin{equation}\label{mid-p1}
		\frac{dP_n^{|m|}(\cos\theta)}{d\theta}\big|_{\theta=0}=\frac{1}{2}\left((n+|m|)(n-|m|+1)P_n^{|m|-1}(1)-P_n^{|m|+1}(1)\right),
	\end{equation}
	and
	\begin{equation}\label{mid-p2}
		\frac{P_n^{|m|}(\cos\theta)}{\sin\theta}\big|_{\theta=0}=-\frac{1}{2|m|}\left(P_{n+1}^{|m|+1}(1)+(n-|m|+1)(n-|m|+2)P_{n+1}^{|m|-1}(1)\right).
	\end{equation}
	Since we have from \cite{WB} that
	\begin{equation}\notag
		P_n^{|m|}(\pm1)\equiv0,\mbox{ for }m\neq0, \quad
		P_n^0(1)\equiv1,
	\end{equation}
	\eqref{mid3} can be further simplified as
	\begin{align}\label{mid4}
		(&-\sum_{n=0}^\infty\bsi^n\frac{j_n(\sqrt{\lambda}r)}{r}\sqrt{\frac{2n+1}{4\pi}}(a_n^0\frac{1}{2}n(n+1)P_n^{-1}(1)+(a_n^1e^{\bsi\phi_0}+a_n^{-1}e^{-\bsi\phi_0})\sqrt{\frac{(n-1)!}{(n+1)!}}\frac{1}{2}(n+1)n)\cos\phi_0\notag\\
		&-\sum_{n=0}^\infty\bsi^{n+1}\frac{j_n(\sqrt{\lambda}r)}{r}\sqrt{\frac{2n+1}{4\pi}}\sqrt{\frac{(n-1)!}{(n+1)!}}(a_n^1e^{\bsi\phi_0}-a_n^{-1}e^{-\bsi\phi_0})\sin\phi_0\frac{1}{2}n(n+1))\sin\phi_0\notag\\
		&+(\eta_1\cos\phi_0+\eta_2)\sum_{n=0}^\infty\bsi^na_n^0j_n(\sqrt{\lambda}r)\sqrt{\frac{2n+1}{4\pi}}=0,
	\end{align}
	where 
	\begin{equation}\notag
		P_n^{-1}(1)=(-1)^1\frac{(n-1)!}{(n+1)!}P_n^1(1)=-\frac{1}{n(n+1)}P_n^1(1)=0.
	\end{equation}
	Therefore, we can rewrite \eqref{mid4} as
	\begin{align}\label{mid5}
		&-\sum_{n=0}^\infty\bsi^n\frac{j_n(\sqrt{\lambda}r)}{r}\sqrt{\frac{2n+1}{4\pi}}\frac{1}{2}n(n+1)\sqrt{\frac{(n-1)!}{(n+1)!}}\sin\phi_0(a_n^1e^{\bsi\phi_0}\cdot e^{\bsi\phi_0}+a_n^{-1}e^{-\bsi\phi_0}\cdot e^{-\bsi\phi_0})\notag\\
		&+(\eta_1\cos\phi_0+\eta_2)\sum_{n=0}^\infty\bsi^n a_n^0j_n(\sqrt{\lambda}r)\sqrt{\frac{2n+1}{4\pi}}=0.
	\end{align}
	Since $j_n(\sqrt{\lambda}r)$ fulfills the following recursive equations (\cite{GBA})
	\begin{equation}\label{recur-j}
		\frac{j_n(\sqrt{\lambda}r)}{r}=\frac{\sqrt{\lambda}}{2n+1}\left(j_{n-1}(\sqrt{\lambda}r)+j_{n+1}(\sqrt{\lambda}r)\right),
	\end{equation}
	we have
	\begin{align}\label{bsl}
		&-\sum_{n=1}^\infty\bsi^n\frac{\sqrt{\lambda}}{2n+1}\left(j_{n-1}(\sqrt{\lambda}r)+j_{n+1}(\sqrt{\lambda}r)\right)\sqrt{\frac{2n+1}{4\pi}}\frac{1}{2}n(n+1)\sqrt{\frac{(n-1)!}{(n+1)!}}\sin\phi_0(a_n^1e^{\bsi 2\phi_0}+a_n^{-1}e^{-\bsi 2\phi_0})\notag\\
		&+(\eta_1\cos\phi_0+\eta_2)\sum_{n=0}^\infty\bsi^n a_n^0j_n(\sqrt{\lambda}r)\sqrt{\frac{2n+1}{4\pi}}=0,\quad\mbox{on}\quad\bsl.
	\end{align}
	
	Recall that on $\Pi_{1}$, it holds 
	\begin{equation}\notag
		\frac{\partial u}{\partial \nu_1}+\eta_1 u=0\quad\mbox{on}\quad \Pi_{1}. 
	\end{equation}
	By the spherical wave expansion \eqref{expan}, we know that
	\begin{align}\label{Pi1}
		&-\frac{1}{r\sin\theta}4\pi\sum_{n=0}^\infty\sum_{m=-n}^n \bsi^{n+1}m a_n^m j_n{\sqrt{\lambda}r}\sqrt{\frac{2n+1}{4\pi}}\sqrt{\frac{(n-|m|)!}{(n+|m|)!}}P_n^{|m|}(\cos\theta)\notag\\
		&+\eta_1 4\pi \sum_{n=0}^\infty\sum_{m=-n}^n \bsi^n a_n^m{\sqrt{\lambda}r}\sqrt{\frac{2n+1}{4\pi}}\sqrt{\frac{(n-|m|)!}{(n+|m|)!}}P_n^{|m|}(\cos\theta)=0\quad\mbox{on } \Pi_{1}.
	\end{align}
	Recall the recursive equation \eqref{recur-pn} in Lemma \ref{recur-P} with respect to $\theta$.
	By substituting \eqref{recur-j} and \eqref{recur-pn} into \eqref{Pi1}, we can deduce that on $\Pi_{1}$
	\begin{align}\label{Pi1-2}
		&\sum_{n=1}^\infty\sum_{{m=-n}\atop{m\neq0}}^{n}\bsi^{n+1}m a_n^m\frac{\sqrt{\lambda}}{2n+1}\left(j_{n-1}(\sqrt{\lambda}r)+j_{n+1}(\sqrt{\lambda}r)\right)\sqrt{\frac{2n+1}{4\pi}}\sqrt{\frac{(n-|m|)!}{(n+|m|)!}}\notag\\
		&\times \frac{1}{2|m|}\left(P_{n-1}^{|m|+1}(\cos\theta)+(n+|m|-1)(n+|m|)P_{n-1}^{|m|-1}(\cos\theta)\right)\notag\\
		&+\eta_1\sum_{n=0}^\infty\sum_{m=-n}^n\bsi^n a_n^m j_n(\sqrt{\lambda}r)\sqrt{\frac{2n+1}{4\pi}}\sqrt{\frac{(n-|m|)!}{(n+|m|)!}}P_n^{|m|}(\cos\theta)\quad\mbox{on}\quad\Pi_{1},
	\end{align}
	
	Similarly, on $\Pi_2$  we have
	\begin{equation}\notag
		\frac{\partial u}{\partial \nu_2}+\eta_2u=0\quad\mbox{on}\quad \Pi_2.
	\end{equation}
	It yields from \eqref{expan} that
	\begin{align}\label{Pi2-1}
		&\frac{1}{r\sin\theta}4\pi\sum_{n=0}^\infty\sum_{m=-n}^n\bsi^{n+1}m a_n^mj_n(\sqrt{\lambda}r)\sqrt{\frac{2n+1}{4\pi}}\sqrt{\frac{(n-|m|)!}{(n+|m|)!}}P_n^{|m|}(\cos\theta)e^{\bsi m \phi_0}\notag\\
		&+\eta_24\pi\sum_{n=0}^\infty\sum_{m=-n}^n\bsi^n a_n^m j_n(\sqrt{\lambda}r)\sqrt{\frac{2n+1}{4\pi}}\sqrt{\frac{(n-|m|)!}{(n+|m|)!}}P_n^{|m|}(\cos\theta)e^{\bsi m \phi_0}\quad\mbox{on}\quad \Pi_2.
	\end{align}
	Combining with \eqref{recur-j} and \eqref{recur-pn}, we can obtain from \eqref{Pi2-1} that
	\begin{align}\label{Pi2-2}
		&-\sum_{n=1}^\infty\sum_{{m=-n}\atop{m\neq0}}^n\bsi^{n+1}m a_n^m\frac{\sqrt{\lambda}}{2n+1}\left(j_{n-1}(\sqrt{\lambda}r)+j_{n+1}(\sqrt{\lambda}r)\right)\sqrt{\frac{2n+1}{4\pi}}\sqrt{\frac{(n-|m|)!}{(n+|m|)!}}e^{\bsi m\phi_0}\notag\\
		&\times  \frac{1}{2|m|}\left(P_{n-1}^{|m|+1}(\cos\theta)+(n+|m|-1)(n+|m|)P_{n-1}^{|m|-1}(\cos\theta)\right)\notag\\
		&+\eta_2\sum_{n=0}^\infty\sum_{m=-n}^n\bsi^n a_n^mj_n(\sqrt{\lambda}r)\sqrt{\frac{2n+1}{4\pi}}\sqrt{\frac{(n-|m|)!}{(n+|m|)!}}P_n^{|m|}(\cos\theta)e^{\bsi m\phi_0}=0\quad\mbox{on}\quad\Pi_2.
	\end{align}
	
	Now, we apply mathematical induction by investigating the coefficients with respect to $r^n$ for $n=0,1,2,\cdots$ in  \eqref{bsl} on $\bsl$ ,\eqref{Pi1-2} on $\Pi_{1}$ and \eqref{Pi2-2} on $\Pi_2$, separately, to prove our main results. Precisely speaking, for the coefficients of $r^0$, from \eqref{bsl} we first have
	\begin{equation}\label{r01}
		-\bsi \frac{\sqrt{\lambda}}{3}\sqrt{\frac{3}{4\pi}}\sqrt{\frac{1}{2}}\sin\phi_0(a_1^1e^{\bsi 2\phi_0}+a_1^{-1}e^{-\bsi 2\phi_0})+(\eta_1\cos\phi_0+\eta_2)a_0^0\sqrt{\frac{1}{4\pi}}=0,
	\end{equation} 
	then we can show that the coefficients of $r^0$ in \eqref{Pi1-2} and \eqref{Pi2-2} respectively fulfill
	\begin{equation}\label{r02}
		-(a_1^1-a_1^{-1})\frac{\sqrt{\lambda}}{3}\sqrt{\frac{3}{4\pi}}\sqrt{\frac{1}{2}}P_0^0(\cos\theta)+\eta_1a_0^0\sqrt{\frac{1}{4\pi}}P_0^0(\cos\theta)=0,
	\end{equation}
	and 
	\begin{equation}\label{r03}
		(a_1^1e^{\bsi \phi_0}-a_1^{-1}e^{-\bsi\phi_0})\frac{\sqrt{\lambda}}{3}\sqrt{\frac{3}{4\pi}}\sqrt{\frac{1}{2}}P_0^0(\cos\theta)+\eta_2a_0^0\sqrt{\frac{1}{4\pi}}P_0^0(\cos\theta)=0.
	\end{equation}
	For simplification, we further rewrite \eqref{r01}, \eqref{r02} and \eqref{r03} more briefly as the following system:
	\begin{equation}\label{r04}
		\begin{cases}
		&-\bsi \sqrt{\frac{\lambda}{6}}\sin\phi_0(a_1^1e^{\bsi 2\phi_0}+a_1^{-1}e^{-\bsi 2\phi_0})+(\eta_1\cos\phi_0+\eta_2)a_0^0=0,\\
		&(a_1^1-a_1^{-1})\sqrt{\frac{\lambda}{6}}-\eta_1a_0^0=0,\\
		&(a_1^1e^{\bsi \phi_0}-a_1^{-1}e^{-\bsi\phi_0})\sqrt{\frac{\lambda}{6}}+\eta_2a_0^0=0.
		\end{cases}
	\end{equation}
	It is easy to see that the determinant of the coefficient matrix of \eqref{r04} with respect to $a_1^{\pm1}$ and $a_0$ fulfills
	\begin{align}\notag
	&\left|	\begin{array}{ccc}
	-\bsi\frac{\lambda}{6}\sin\phi_0e^{\bsi 2\phi_0}	& -\bsi\frac{\lambda}{6}\sin\phi_0e^{-\bsi 2\phi_0} & \eta_1\cos\phi_0+\eta_2 \\ 
	\frac{\lambda}{6}	& -\frac{\lambda}{6} & -\eta_1 \\ 
	\frac{\lambda}{6}e^{\bsi\phi_0}	& -\frac{\lambda}{6}e^{-\bsi\phi_0}  & \eta_2 
		\end{array} \right|\notag\\
	&	=\frac{\lambda}{36}\left((\eta_1\cos\phi_0+\eta_2)2\bsi\sin\phi_0+\eta_1\bsi\sin2\phi_0+\eta_22\bsi\sin\phi_0\cos2\phi_0\right)\notag\\
		&=\frac{\bsi\lambda}{18}\sin\phi_0(2\eta_1\cos\phi_0+\eta_2(\cos2\phi_0+1)). 
	\end{align}
	By the condition \eqref{angle}, we can know that  $\phi_0\neq0,   \pi$. Due to \eqref{new-eta},  one knows that $a_1^{\pm1}=a_0^0=0$.

	We proceed to compare the coefficients of $r^1$ on the both sides of \eqref{bsl}, \eqref{Pi1-2} and \eqref{Pi2-2}, and we can derive that
	\begin{equation}\notag
		\frac{\sqrt{\lambda}}{5}\frac{\sqrt{\lambda}}{3}\sqrt{\frac{5}{4\pi}}3\sqrt{\frac{1}{6}}\sin\phi_0(a_2^1e^{\bsi 2\phi_0}+a_2^{-1}e^{-\bsi 2\phi_0})+(\eta_1\cos\phi_0+\eta_2)\bsi a_1^0\frac{\sqrt{\lambda}}{3}\sqrt{\frac{3}{4\pi}}=0\quad\mbox{on} \ \bsl,
	\end{equation}
	\begin{align}\label{r1-1}
		&-\bsi\left(2(a_2^2-a_2^{-2})\sqrt{\frac{1}{4!}}3P_1^1(\cos\theta)+(a_2^1-a_2^{-1})\sqrt{\frac{1}{3!}}3P_1^0(\cos\theta)\right)\frac{\sqrt{\lambda}}{5}\frac{\sqrt{\lambda}}{3}\sqrt{\frac{5}{4\pi}}\notag\\
		&+\eta_1\bsi\frac{\sqrt{\lambda}}{3}\sqrt{\frac{3}{4\pi}}\left((a_1^1+a_1^{-1})\sqrt{\frac{1}{2!}}P_1^1(\cos\theta)+a_1^0P_1^0(\cos\theta)\right)=0\quad\mbox{on } \Pi_{1},
	\end{align}
	and 
	\begin{align}\label{r1-2}
		&\bsi\frac{\sqrt{\lambda}}{5}\frac{\sqrt{\lambda}}{3}\sqrt{\frac{5}{4\pi}}\left(2(a_2^2e^{\bsi 2\phi_0}-a_2^{-2}e^{-\bsi2\phi_0})\sqrt{\frac{1}{4!}}3P_1^1(\cos\theta)+(a_2^1e^{\bsi \phi_0}-a_2^{-1}e^{-\bsi\phi_0})\sqrt{\frac{1}{3!}}3P_1^0(\cos\theta)\right)\notag\\
		&+\eta_2\bsi\frac{\sqrt{\lambda}}{3}\sqrt{\frac{3}{4\pi}}\left((a_1^1e^{\bsi\phi_0}+a_1^{-1}e^{-\bsi\phi_0})\sqrt{\frac{1}{2!}}P_1^1(\cos\theta)+a_1^0P_1^0(\cos\theta)\right)=0\quad\mbox{on }\Pi_2.
	\end{align}
	Substituting $a_1^{\pm1}=a_0^0=0$ into \eqref{r1-1} and \eqref{r1-2}, we have
	\begin{align}\label{r1-3}
		&\sqrt{\frac{3\lambda}{10}}\sin \phi_0(a_2^1e^{\bsi 2\phi_0}+a_2^{-1}e^{-\bsi2\phi_0})+(\eta_1\cos\phi_0+\eta_2)\bsi a_1^0 \sqrt{3}=0,\notag\\
		&-\bsi\left(6(a_2^2-a_2^{-2})\sqrt{\frac{1}{4!}}P_1^1(\cos\theta)+3(a_2^1-a_2^{-1})\sqrt{\frac{1}{3!}}P_1^0(\cos\theta)\right)\sqrt{\frac{\lambda}{5}}+\eta_1\bsi\sqrt{3}a_1^0P_1^0(\cos\theta)=0,\notag\\
		&\bsi\sqrt{\frac{\lambda}{5}}\left(6(a_2^2e^{\bsi2\phi_0}-a_2^{-2}e^{-\bsi2\phi_0})\sqrt{\frac{1}{4!}}P_1^1(\cos\theta)+3(a_2^1e^{\bsi\phi_0}-a_2^{-1}e^{-\bsi\phi_0})\sqrt{\frac{1}{3!}}P_1^0(\cos\theta)\right)\notag\\
		&+\eta_2\bsi\sqrt{3}a_1^0P_1^0(\cos\theta)=0.
	\end{align}
	 After rearranging \eqref{r1-3} in terms of $P_1^1(\cos\theta)$ and $P_1^0(\cos\theta)$, we can know that
	\begin{equation}\label{r1-41}
		\sqrt{\frac{\lambda}{10}}\sin\phi_0(a_2^1e^{\bsi2\phi_0}+a_2^{-1}e^{-\bsi2\phi_0})+(\eta_1\cos\phi_0+\eta_2)\bsi a_1^0=0,
	\end{equation}
	\begin{equation}\label{r1-42}
		-\bsi 6\sqrt{\frac{1}{4!}}\sqrt{\frac{\lambda}{5}}(a_2^2-a_2^{-2})P_1^1(\cos\theta)+\bsi\left(\sqrt{3}\eta_1a_1^0-3(a_2^1-a_2^{-1})\sqrt{\frac{1}{3!}}\sqrt{\frac{\lambda}{5}}\right)P_1^0(\cos\theta)=0,
	\end{equation}
	and 
	\begin{equation}\label{r1-43}
		\bsi6\sqrt{\frac{1}{4!}}\sqrt{\frac{\lambda}{5}}(a_2^2e^{\bsi2\phi_0}-a_2^{-2}e^{-\bsi 2\phi_0})P_1^1(\cos\theta)+\bsi \left(\sqrt{3}\eta_2a_1^0+3(a_2^1e^{\bsi\phi_0}-a_2^{-1}e^{-\bsi\phi_0})\sqrt{\frac{1}{3!}}\sqrt{\frac{\lambda}{5}}\right)P_1^0(\cos\theta)=0.
	\end{equation}
	In \eqref{r1-42} and \eqref{r1-43}, using the orthogonal property of the associated Legendre polynomials in Lemma \ref{base2}, we can obtain that
	\begin{equation}\label{a22}
		\begin{cases}
		a_2^2-a_2^{-2}=0,\\
		\eta_1\sqrt{3}a_1^0-3\sqrt{\frac{1}{3!}}\sqrt{\frac{\lambda}{5}}(a_2^1-a_2^{-1})=0.
		\end{cases}
	\end{equation}
	and 
	\begin{equation}\label{a222}
		\begin{cases}
		a_2^2e^{\bsi2\phi_0}-a_2^{-2}e^{-\bsi2\phi_0}=0,\\
		\eta_2\sqrt{3}a_1^0+3\sqrt{\frac{1}{3!}}\sqrt{\frac{\lambda}{5}}(a_2^1e^{\bsi\phi_0}-a_2^{-1}e^{-\bsi\phi_0})=0.
		\end{cases}
	\end{equation}
	From the first equations in \eqref{a22} and \eqref{a222}, it is easy to see that under condition \eqref{angle}, which indicates $\phi_0\neq0, \frac{\pi}{2} ,\pi$, the determinant of the coefficient matrix of $a_2^{\pm2}$ fulfills that
	\begin{equation}\notag
		\left|
\begin{array}{cc}
1& -1 \\ 
e^{\bsi\phi_0}& e^{-\bsi\phi_0} 
\end{array} 
		\right|
		=2\bsi\sin2\phi_0\neq0,
	\end{equation}
	and therefore we have $a_2^{\pm2}=0$. Now combining with \eqref{r1-41}, \eqref{a22} and \eqref{a222}, we can derive the deteminant of the coefficient matrix with respect to $a_1^0$, $a_2^{\pm1}$ fulfills
	\begin{align}\notag
		&\left|
		\begin{array}{ccc}
		\bsi(\eta_1\cos\phi_0+\eta_2)& \sqrt{\frac{\lambda}{10}} \sin\phi_0e^{\bsi2\phi_0} & \sqrt{\frac{\lambda}{10}} \sin\phi_0e^{-\bsi2\phi_0}  \\ 
		\sqrt{3}\eta_1 & -3\sqrt{\frac{1}{3!}}\sqrt{\frac{\lambda}{5}} & 3\sqrt{\frac{1}{3!}}\sqrt{\frac{\lambda}{5}}  \\ 
		\sqrt{3}\eta_2&3\sqrt{\frac{1}{3!}}\sqrt{\frac{\lambda}{5}}e^{\bsi\phi_0}  & -3\sqrt{\frac{1}{3!}}\sqrt{\frac{\lambda}{5}}e^{-\bsi\phi_0}
		\end{array} 
		\right|\notag\\
		&=\frac{3\lambda}{5}\sin\phi_0(2\eta_1\cos\phi_0+\eta_2+\eta_2\cos2\phi_0)\neq 0.\notag
	\end{align}
	Due  to the fact that $\phi_0\neq0, \pi$, under the condition \eqref{new-eta},   we can obtain $a_1^0=a_2^{\pm1}=0$.
	
	Utilizing mathematical induction, we can assume that $a_{N-2}^0=a_{N-1}^{\pm m}=0$ for $m=1,2, \cdots, N-1$. Then by comparing the coefficients of $r^{N-1}$ on the both sides of \eqref{bsl}, \eqref{Pi1-2} and \eqref{Pi2-2}, it holds: 
	\begin{align}\label{rnl}
		&-\bsi^N\frac{\sqrt{\lambda}}{2N+1}\sqrt{\frac{2N+1}{4\pi}}\frac{1}{2}N(N+1)\sqrt{\frac{(N-1)!}{(N+1)!}}\frac{(\sqrt{\lambda})^{N-1}}{(2N-1)!!}\sin\phi_0(a_N^1e^{\bsi2\phi_0}+a_n^{-1}e^{-\bsi2\phi_0})\notag\\
		&+(\eta_1\cos\phi_0+\eta_2)\bsi^{N-1}a_{N-1}^0\sqrt{\frac{2N-1}{4\pi}}\frac{(\sqrt{\lambda})^{N-1}}{(2N-1)!!}=0\quad\mbox{on}\quad\bsl,
	\end{align}
	\begin{align}\label{rn1}
		&\bsi^{N+1}\frac{\sqrt{\lambda}}{2N+1}\sqrt{\frac{2N+1}{4\pi}}\frac{(\sqrt{\lambda})^{N-1}}{(2N-1)!!}\sum_{{m=-N}\atop{m\neq0}}^Nma_N^m\sqrt{\frac{(N-|m|)!}{(N+|m|)!}}\frac{1}{2|m|}(P_{N-1}^{|m|+1}(\cos\theta)+(N+|m|-1)\notag\\
		&\times (N+|m|)P_{N-1}^{|m|-1}(\cos\theta))+\eta_1\bsi^{N-1}\sum_{m=-(N-1)}^{N-1}a_{N-1}^m\frac{(\sqrt{\lambda})^{N-1}}{(2N-1)!!}\sqrt{\frac{2N-1}{4\pi}}\sqrt{\frac{(N-1-|m|)!}{(N-1+|m|)!}}\notag\\
		&\times  P_{N-1}^{|m|}(\cos\theta)=0\quad\mbox{on}\quad\Pi_1,
	\end{align}
	and
	\begin{align}\label{rn2}
		&-\bsi^{N+1}\frac{\sqrt{\lambda}}{2N+1}\sqrt{\frac{2N+1}{4\pi}}\frac{(\sqrt{\lambda})^{N-1}}{(2N-1)!!}\sum_{{m=-N}\atop{m\neq0}}^Nma_N^me^{\bsi m\phi_0}\sqrt{\frac{(N-|m|)!}{(N+|m|)!}}\frac{1}{2|m|}(P_{N-1}^{|m|+1}(\cos\theta)+(N+|m|-1)\notag\\
		&\times (N+|m|)P_{N-1}^{|m|-1}(\cos\theta))+\eta_2\bsi^{N-1}\sum_{m=-(N-1)}^{N-1}a_{N-1}^m\frac{(\sqrt{\lambda})^{N-1}}{(2N-1)!!}\sqrt{\frac{2N-1}{4\pi}}\sqrt{\frac{(N-1-|m|)!}{(N-1+|m|)!}}\notag\\
		&\times P_{N-1}^{|m|}(\cos\theta)e^{\bsi m\phi_0}=0 \quad\mbox{on}\quad \Pi_2.
	\end{align}
	In \eqref{rn1} and \eqref{rn2}, by using the fact that
	\begin{equation}\notag
		P_n^m=0\quad\mbox{for}\quad |m|>n,
	\end{equation}
	we can deduce from \eqref{rn1} that
	\begin{align}\label{rn1-2}
		&-\sqrt{\frac{\lambda}{2N+1}}[N(a_N^N-a_N^{-N})\sqrt{\frac{1}{(2N)!}}(2N-1)P_{N-1}^{N-1}(\cos\theta)+(N-1)(a_{N}^{N-1}-a_N^{-(N-1)})\notag\\
		&\times \sqrt{\frac{1}{(2N-1)!}}(2N-1)P_{N-1}^{N-2}(\cos\theta)+\sum_{{m=-(N-2)}\atop{m\neq0}}^{N-2}ma_N^m\sqrt{\frac{(N-|m|)!}{(N+|m|)!}}\frac{1}{2|m|}(P_{N-1}^{|m|+1}(\cos\theta)\notag\\
		&+(N+|m|-1)(N+|m|)P_{N-1}^{|m|-1}(\cos\theta))]+\eta_1\sqrt{2N-1}\sum_{m=-(N-1)}^{N-1}a_{N-1}^m\sqrt{\frac{(N-1-|m|)!}{(N-1+|m|)!}}P_{N-1}^{|m|}(\cos\theta)=0.
	\end{align}
	Rearranging terms in \eqref{rn1-2} by virtue of $P_{N-1}^{|m|}$, we have
	\begin{align}\label{rn-mid}
		&\big[-\sqrt{\frac{\lambda}{2N+1}}\left(\right.N(a_N^N-a_N^{-N})\sqrt{\frac{1}{(2N)!}}(2N-1)+(N-2)(a_N^{N-2}-a_N^{-(N-2)})\sqrt{\frac{2}{(2N-2)!}}\frac{1}{2(N-2)}\left.\right)\notag\\
		&+\eta_1\sqrt{2N-1}(a_{N-1}^{N-1}+a_{N-1}^{-({N-1})})\sqrt{\frac{1}{2(N-1)!}}\big]P_{N-1}^{N-1}(\cos\theta)\notag\\
		&+\big[-\sqrt{\frac{\lambda}{2N+1}}\left(\right.(N-1)(a_N^{N-1}-a_N^{-(N-1)})\sqrt{\frac{1}{(2N-1)!}}(2N-1)+(N-3)(a_N^{N-3}-a_N^{-(N-3)})\notag\\
		&\times \sqrt{\frac{3!}{(2N-3)!}}\frac{1}{2(N-3)}\left.\right)+\eta_1\sqrt{2N-1}(a_{N-1}^{N-2}+a_{N-1}^{-({N-2})})\sqrt{\frac{1}{2(N-3)!}}\big]P_{N-1}^{N-2}(\cos\theta)\notag\\
		&+\sum_{m=2}^{N-3}\big[-\sqrt{\frac{\lambda}{2N+1}}\left(\right.(m-1)(a_N^{m-1}-a_N^{-(m-1)})\sqrt{\frac{(N-m+1)!}{(N+m-1)!}}\frac{1}{2(m-1)}+(m+1)(a_N^{m+1}-a_N^{-(m+1)})\notag\\
		&\times \sqrt{\frac{(N-m+1)!}{(N+m-1)!}}\frac{1}{2(m+1)}(N+m)(N+m+1)\left.\right)+\eta_1\sqrt{2N-1}(a_{N-1}^m+a_{N-1}^{-m})\sqrt{\frac{(N-1-m)!}{(N-1+m)!}}\big]P_{N-1}^m(\cos\theta)\notag\\
		&+\big[-\sqrt{\frac{\lambda}{2N+1}}2(a_N^2-a_N^{-2})\sqrt{\frac{(N-2)!}{(N+2)!}}\frac{1}{4}(N+1)(N+2)+\eta_1\sqrt{2N-1}(a_{N-1}^1+a_{N-1}^{-1})\sqrt{\frac{(N-2)!}{N!}}\big]\notag\\
		&\times  P_{N-1}^1(\cos\theta)+\big[-\sqrt{\frac{\lambda}{2N+1}}(a_N^1-a_N^{-1})\sqrt{\frac{(N-1)!}{(N+1)}}\frac{1}{2}N(N+1)+\eta_1\sqrt{2N-1}a_{N-1}^0\big]P_{N-1}^0(\cos\theta)=0.
	\end{align}
	With the help of the orthogonal property of the associated Legendre polynomials in Lemma \ref{base2}, we can further derive from \eqref{rn-mid} that
	\begin{align}\label{rn-mid2}
	&-\sqrt{\frac{\lambda}{2N+1}}\big[N\sqrt{\frac{1}{(2N)!}}(2N-1)(a_N^N-a_N^{-N})+(N-2)\sqrt{\frac{2}{(2N-2)!}}\frac{1}{2(N-2)}(a_N^{N-2}-a_N^{-(N-2)})\big]\notag\\
	&+\eta_1\sqrt{2N-1}\sqrt{\frac{1}{2(N-1)!}}(a_{N-1}^{N-1}+a_{N-1}^{-(N-1)})=0,
	\end{align}
	and
	\begin{align}\label{rn-mid3}
	&-\sqrt{\frac{\lambda}{2N+1}}\big[(N-1)\sqrt{\frac{1}{(2N-1)!}}(2N-1)(a_N^{N-1}-a_N^{-(N-1)})+(N-3)\sqrt{\frac{3!}{(2N-3)!}}\frac{1}{2(N-3)}\notag\\
	&(a_N^{N-3}-a_N^{-(N-3)})\big]
	+\eta_1\sqrt{2N-1}\sqrt{\frac{1}{(2N-3)!}}(a_{N-1}^{N-2}+a_{N-1}^{-(N-2)})=0.
	\end{align}
	Moreover, for $m=2,3,\cdots, N-3$, we have
	\begin{align}\label{rn-mid4}
		&-\sqrt{\frac{\lambda}{2N+1}}\frac{1}{2}\big[\sqrt{\frac{(N-m+1)!}{(N+m-1)!}}(a_N^{m-1}-a_N^{-(m-1)})+\sqrt{\frac{(N-m-1)!}{(N+m+1)!}}(N+m)(N+m+1)\notag\\
		&\times (a_N^{m+1}-a_N^{-(m+1)})\big]+\eta_1\sqrt{2N-1}\sqrt{\frac{(N-1-m)!}{(N-1+m)!}}(a_{N-1}^m+a_{N-1}^{-m})=0,
	\end{align}
	and 
	\begin{equation}\label{rn-mid5}
		-\sqrt{\frac{\lambda}{2N+1}}\frac{1}{2}\sqrt{\frac{(N-2)!}{(N+2)!}}(N+1)(N+2)(a_N^2-a_N^{-2})+\eta_1\sqrt{2N-1}\sqrt{\frac{(N-2)!}{N!}}(a_{N-1}^1+a_{N-1}^{-1})=0,
	\end{equation}
	\begin{equation}\label{rn-mid6}
		-\sqrt{\frac{\lambda}{2N+1}}\frac{1}{2}\sqrt{\frac{(N-1)!}{(N+1)!}}(N+1)N(a_N^1-a_N^{-1})+\eta_1\sqrt{2N-1}a_{N-1}^0=0.
	\end{equation}
	
	Since by induction, there holds $a_{N-1}^m=0$, for $m=1,2,\cdots, N-1$, and $a_{N-2}^0=0$, we can derive from \eqref{rn-mid2}, \eqref{rn-mid3}, \eqref{rn-mid4}, \eqref{rn-mid5} and \eqref{rn-mid6} the following system
	\begin{align}\label{rn-sys}
		&N(2N-1)\sqrt{\frac{1}{(2N)!}}(a_N^N-a_N^{-N})+\frac{1}{2}\sqrt{\frac{2}{(2N-2)!}}(a_N^{N-2}-a_N^{-(N-2)})=0,\notag\\
		&(N-1)(2N-1)\sqrt{\frac{1}{(2N-1)!}}(a_N^{N-1}-a_N^{-(N-1)})+\frac{1}{2}\sqrt{\frac{3}{(2N-3)!}}(a_N^{N-3}-a_N^{-(N-3)})=0,\notag\\
		&\sqrt{\frac{(N-m+1)!}{(N+m-1)!}}(a_N^{m-1}-a_N^{-(m-1)})+\sqrt{\frac{(N-m-1)!}{(N+m+1)!}}(N+m)(N+m+1)(a_N^{m+1}-a_N^{-(m+1)})=0,\notag\\
		&a_N^2-a_N^{-2}=0,\notag\\
		&-\sqrt{\frac{\lambda}{2N+1}}\frac{1}{2}\sqrt{\frac{(N-1)!}{(N+1)!}}(N+1)N(a_N^1-a_N^{-1})+\eta_1\sqrt{2N-1}a_{N-1}^0=0,
	\end{align}
	where $m=2,3, \cdots, N-3$.
	
	Following the similar arguments above of \eqref{rn1} on $\Pi_{1}$, from \eqref{rn2}, we can deduce that on $\Pi_2$, it holds that
	\begin{align}\label{rn-sys2}
		&N(2N-1)\sqrt{\frac{1}{(2N)!}}(a_N^Ne^{\bsi N\phi_0}-a_N^{-N}e^{-\bsi N\phi_0})+\frac{1}{2}\sqrt{\frac{2}{(2N-2)!}}(a_N^{N-2}e^{\bsi(N-2)\phi_0}-a_N^{-(N-2)e^{-\bsi(N-2)\phi_0}})=0,\notag\\
		&(N-1)(2N-1)\sqrt{\frac{1}{(2N-1)!}}(a_N^{N-1}e^{\bsi (N-1)\phi_0}-a_N^{-(N-1)}e^{-\bsi (N-1)\phi_0})+\frac{1}{2}\sqrt{\frac{3!}{(2N-3)!}}\notag\\
		&\cdot(a_N^{N-3}e^{\bsi(N-3)\phi_0}-a_N^{-(N-3)e^{-\bsi(N-3)\phi_0}})=0,\notag\\
		&(a_N^{m-1}e^{\bsi(m-1)\phi_0}-a_N^{-(m-1)}e^{-\bsi(m-1)\phi_0})\sqrt{\frac{(N-m+1)!}{(N+m-1)!}}+(N+m)(N+m+1)\sqrt{\frac{(N-m-1)!}{(N+m-1)!}}\notag\\
		&\cdot(a_N^{m+1}e^{\bsi(m+1)\phi_0}-a_N^{-(m+1)}e^{-\bsi(m+1)\phi_0})=0,\quad\mbox{for }m=2,3,\cdots, N-3,\notag\\
		&a_N^2e^{\bsi2\phi_0}-a_N^{-2}e^{-\bsi2\phi_0}=0,\notag\\
		&\sqrt{\frac{\lambda}{2N+1}}\frac{1}{2}N(N+1)\sqrt{\frac{(N-1)!}{(N+1)!}}(a_N^1e^{\bsi\phi_0}-a_N^{-1}e^{-\bsi\phi_0})+\eta_2\sqrt{2N-1}a_{N-1}^0=0.
	\end{align}
	
	Combining with \eqref{rnl}, \eqref{rn-sys} and \eqref{rn-sys2}, we can directly see that since $\phi_0\neq0, \frac{\pi}{2}, \pi$, the determinant of the coefficient matrix with respect to $a_N^{\pm2}$ satisfies 
	\begin{equation}\notag
		\left|
		\begin{array}{cc}
		1& -1 \\ 
		e^{\bsi2\phi_0}& e^{-\bsi2\phi_0}
		\end{array} 
		\right|=2\bsi\sin2\phi_0\neq0,
	\end{equation}
	which indicates that $a_N^{\pm2}=0$. Moreover, denoting
	\begin{equation}\notag
		c(N,\lambda):=\sqrt{\frac{\lambda}{2N+1}}\frac{N}{2}(N+1)\sqrt{\frac{(N-1)!}{(N+1)!}},
	\end{equation}
	the determinant of the coefficient matrix associated with $a_N^{\pm1}$, and $a_{N-1}^0$ fulfills 
	\begin{align}\notag
		&\left|
		\begin{array}{ccc}
		-\bsi c(N, \lambda)\sin\phi_0e^{\bsi2\phi_0} &-\bsi c(N, \lambda)\sin\phi_0e^{-\bsi2\phi_0}  &(\eta_1\cos\phi_0+\eta_2)\sqrt{2N-1}  \\ 
		-c(N,\lambda)& c(N,\lambda)  & \eta_1\sqrt{2N-1} \\
		c(N,\lambda) e^{\bsi\phi_0}& -c(N,\lambda)e^{-\bsi\phi_0} & \eta_2\sqrt{2N-1} 
		\end{array} 
		\right|\notag\\
		&=-2\bsi\sin\phi_0\sqrt{2N-1}\frac{\lambda}{2N+1}\frac{N^2}{4}(N+1)^2\frac{(N-1)!}{(N+1)!}\left(2\eta_1\cos\phi_0+\eta_2(1+\cos2\phi_0)\right).
	\end{align}
	Therefore, under the condition \eqref{new-eta},  in view of $\phi_0\neq0,  \pi$,  we can obtain that $a_N^{\pm1}=a_{N-1}^0=0$.
	
	Now, by substituting $a_{N-1}^0=a_N^{\pm1}=a_N^{\pm2}=0$ into \eqref{rn-sys} and \eqref{rn-sys2}, respectively, we can deduce that for $m=2$, there holds
	\begin{equation}\notag
		\begin{cases}
		&a_N^3-a_N^{-3}=0,\\
		&a_N^3e^{\bsi3\phi_0}-a_N^{-3}e^{-\bsi3\phi_0}=0.
		\end{cases}
	\end{equation}
	Since $\phi_0\neq\frac{q\pi}{3}$ ($q=0, 1, 2, 3$) under condition \eqref{angle}, we have $a_N^{\pm3}=0$. Following the similar argument, by taking $a_N^{\pm3}=0$ into \eqref{rn-sys} and \eqref{rn-sys2}, we can generally derive that for $m=2,3, \cdots, N$, since $\phi_0\neq  \frac{q\pi}{m}$ ($q=0, 1, 2, \cdots, m-1$), there holds $a_N^{\pm m}=0$ for $m=1,2,\cdots, N$. 
	
	The proof is complete.	
\end{proof}

\begin{remark}\label{rem27}
If the impedance parameters $\eta_i$ associated with $\Pi_i$ ($i=1,2$) fulfill
\begin{equation}\notag
	\eta_1=\eta_2=\eta\neq 0,
\end{equation} 
where $\eta \in \mathbb C$ is a constant, then the condition \eqref{new-eta} can be directly satisfied when $\phi_0\neq 0, \pi/2$,   and $\pi$. Indeed, we have
	\begin{equation}\label{eq:eta cond}
			2\eta_1\cos\phi_0+\eta_2(\cos2\phi_0+1)=2\eta \cos\phi_0 (1+\cos\phi_0) \neq0.
	\end{equation} 	  
\end{remark}

%
%

\begin{remark}\label{rem-eta func}
	The impedance parameter $\eta$ is assume to be a nonzero constant in Theorem \ref{main-vani}. Indeed, the same results hold for more general case that $\eta$ can be a real-analytic function with the series representation under spherical coordinate system
	\begin{equation}\label{eta-func}
		\eta(r,\theta,\phi)=\sum_{\ell=0}^{\infty}\alpha_{\ell}(\theta, \phi)r^{\ell}.
	\end{equation}
	Define $\mathrm{deg}_\eta$ to be the degree of the analytic function $\eta$. More precisely, if $\mathrm{deg}_\eta=N$, it holds that $a_i=0$ for $i=0,1,\cdots, N-1$, and $a_{N}\neq0$. Recall the recursive equations \eqref{bsl}, \eqref{Pi1-2} and \eqref{Pi2-2}, by assuming
	\begin{equation}\label{ad1}
	\eta_1=\sum_{\ell=0}^\infty\alpha_{\ell}^{(1)}(\theta, \phi)r^\ell\quad\mbox{and}\quad \eta_2=\sum_{\ell=0}^\infty\alpha_{\ell}^{(2)}(\theta, \phi)r^\ell,
	\end{equation}
	and substituting \eqref{ad1} into \eqref{bsl}, we can obtain that
	\begin{align}\label{bsl-eta}
	&-\sum_{n=1}^\infty\bsi^n\frac{\sqrt{\lambda}}{2n+1}\left(j_{n-1}(\sqrt{\lambda}r)+j_{n+1}(\sqrt{\lambda}r)\right)\sqrt{\frac{2n+1}{4\pi}}\frac{1}{2}n(n+1)\sqrt{\frac{(n-1)!}{(n+1)!}}\sin\phi_0(a_n^1e^{\bsi 2\phi_0}+a_n^{-1}e^{-\bsi 2\phi_0})\notag\\
	&+\left((\alpha_{0}^{(1)}+\sum_{\ell=1}^\infty\alpha_{\ell}^{(1)}(\theta, \phi)r^\ell)\cos\phi_0+(\alpha_{0}^{(2)}+\sum_{\ell=1}^\infty\alpha_{\ell}^{(2)}(\theta, \phi)r^\ell)\right)\sum_{n=0}^\infty\bsi^n a_n^0j_n(\sqrt{\lambda}r)\sqrt{\frac{2n+1}{4\pi}}=0.
	\end{align}
	It is easy to verify that $\mathrm{deg}_{\eta u}\geq N$, which is dominated by the order of the term $j_{n-1}$. And therefore, the vanishing  property of the Laplacian eigenfunction at the intersecting generalized singular planes with real analytic impedance parameters can be established similarly as Theorem \ref{main-vani}, where the condition \eqref{new-eta} is replaced by
	$$
	2\alpha_{0}^{(1)}\cos\phi_0+\alpha_{0}^{(2)}(\cos2\phi_0+1)\neq 0. 
	$$
\end{remark}

\begin{remark}
	Compared with the study on the geometric structure of Laplacian eigenfunctions in \cite[Theorem 2.11]{CDL3}, it is direct to see that our current results are more general by relaxing the technical condition $u|_{B_\epsilon(\mathbf0)\cap \bsl}\equiv0$, which is relatively hard to be fulfilled in the study on the application of inverse problems for an edge corner. Moreover, the condition \eqref{new-eta} implies that the rationality  on the dihedral angle of two intersecting adjacent planes is sufficient to determine the vanishing orders of Laplacian eigenfunctions. Indeed,  we  need certain assumption on the roots of  the associated Legendre polynomials  to study the  vanishing order of  the underlying Laplacian eigenfunction at a vertex corner; see \cite[Theorem 3.1]{CDL3}  for more details. 
	
\end{remark}


\section{Unique identifiability for inverse problems}\label{sec3}

In this section, we shall establish the unique identifiability results for the inverse scattering problems by using the geometric results established in the previous section. For simplicity, we shall only consider the boundary impedance parameter $\eta$ to be a nonzero constant throughout this section. It is remarked that if $\eta$ is a real-analytic function with the form \eqref{eta-func}, similar unique identifiability results can be obtained.


\subsection{Unique recovery for the inverse obstacle problem}

Recalling the mathematical setup for the inverse obstacle problem in Subsection \ref{subsec-obstacle}, we are going to present the proofs of the uniqueness results for the inverse problem \eqref{inverse} for certain admissible complex polyhedral obstacles defined by Definitions \ref{ad obstacle0} and \ref{def60}.
%


\begin{proof}[Proof of Theorem \ref{inverse10}(Irrational case)]
	    We prove the theorem by absurdity. Assume that
	    there exists a corner
	    $\mathbf x_c$ on $\partial \mathbf{G}$, which is either located at $\Omega_1$ or  $\Omega_2$. Without loss of generality, we assume that $\mathbf x_c$ is a corner of $\Omega_2$, i.e. $\mathbf{ x}_c\in\Omega_2\backslash\overline{ \Omega }_1$. Suppose that $B_h(\mathbf{ x}_c)$ is an open ball centered at $\mathbf{ x}_c$ with sufficiently small $h\in\mathbb{R}_+$ fulfilling that $B_h(\mathbf x_c)\Subset\mathbb{R}^3\backslash\overline \Omega_1 $. Suppose that 
	    \begin{equation}\notag
	    	B_h(\mathbf{ x}_c)\cap \partial\Omega_2=\Pi_i, \quad i=1,2,\cdots, n,
	    \end{equation}
	    for $n\geq2$.

		Recall that $\mathbf{G}$ is the unbounded connected component of $\mathbb{R}^3\backslash\overline{(\Omega_1\cup\Omega_2)}$. From \eqref{eq:cond10}, by the Rellich theorem (cf. \cite{CK}), we have
		\begin{equation}\label{eq:aa3}
		u^1(\mathbf x; k, \mathbf{d})={u}^2(\mathbf x; k, \mathbf{d}),\quad \mathbf x\in\mathbf{G}.
		\end{equation}
		Since $\Pi_i\subset\partial\mathbf{G}$, combining with \eqref{eq:aa3} and the generalized singular boundary condition defined on $\partial\Omega_2$, it is easy to know that
		\begin{equation}\label{eq:aa4}
		\partial_\nu u^1+\eta_2 u^1=\partial_\nu  u^2+\eta_2 u^2=0\quad\mbox{on}\ \ \Pi_i, \ i=1,2,\cdots, n.
		\end{equation}
		Moreover, since $B_h(\mathbf x_c)\Subset\mathbb{R}^3\backslash\overline \Omega_1 $, there holds $-\Delta u^1=k^2 u^1$ in $B_h(\mathbf x_c)$. Recall that the impedance parameter on each faces of $\Omega_2$  is a fixed nonzero constant.  It is obvious to see that for the admissible polyhedral obstacles $\Omega_1$ and $\Omega_2$, the condition \eqref{new-eta} is automatically satisfied by Remark \ref{rem27}. By utilizing Theorem \ref{main-vani} for the Laplacian eigenfunction $u^1$, since $\mathbf{ x}_c$ is an irrational angle which fulfills condition \eqref{angle}, we can obtain that 
		\begin{equation}\label{inver1-ome}
		u^1(\mathbf{ x}; k, \mathbf{d})\equiv0 \mbox{ in } B_h(\mathbf{ x}_c),
		\end{equation}
	which in turn yields by the analytic continuation that 
	\begin{equation}\label{eq:aa51}
	u^1(\mathbf x; k, \mathbf{d})=0\quad\mbox{in}\ \ \mathbb{R}^3\backslash\overline{\Omega}. 
	\end{equation}
	In particular, one has from \eqref{eq:aa51} that
	\begin{equation}\label{eq:aa6}
	\lim_{|\mathbf x|\rightarrow\infty} \left|u^1(\mathbf x; k, \mathbf{d})\right|=0. 
	\end{equation}
	However, we know from the formulations of the forward scattering problem that
	\begin{equation}\label{eq:aa61}
	\lim_{|\mathbf x|\rightarrow\infty} \left|u^1(\mathbf  x; k, \mathbf{d})\right|=\lim_{|\mathbf x|\rightarrow\infty} \left|e^{\mathrm{i}k\mathbf x\cdot \mathbf{d}}+u^s(\mathbf x; k, \mathbf{d})\right|=1, 
	\end{equation}
	which induce the contradiction to \eqref{eq:aa6}.

	Now, we show that $\eta_1=\eta_2$.
	Suppose that $\Gamma\subset \partial\Omega_1\cap\partial\Omega_2$ is an open subset such that $\eta_1\neq \eta_2$ on $\Gamma$. 
	From \eqref{eq:aa3}, we have already known that $u^1=u^2$ in $\mathbb{R}^3\backslash\overline{(\Omega_1\cup\Omega_2)}$, from which we can directly derive that
	\begin{equation}\label{eq:bb6}
	\partial_\nu u^1+\eta_1 u^1=0, \ \ \partial_\nu u^2+\eta_2 u^2=0,\ \ u^1=u^2, \ \ \partial_\nu u^1=\partial_\nu u^2\quad\mbox{on}\ \ \Gamma. 
	\end{equation}
	After rearranging terms in \eqref{eq:bb6}, we have 
	\begin{equation}\label{obst-eta}
	(\eta_1-\eta_2)u^1=0 \quad\mbox{on } \ \Gamma.
	\end{equation}
	Since $\eta_1\neq{\eta_2}$ on $\Gamma$, 
	\mm{we can deduce by direct computing that}
	\begin{equation*}\label{eq:bb7}
	u^1=\partial_\nu u^1=0\quad\mbox{on}\ \ \Gamma.
	\end{equation*}
	By the classcial Holmgren's uniqueness result (cf. \cite{Liu-Zou}), it is easy to obtain that $u^1=0$ in $\mathbb{R}^3\backslash\Omega$. Therefore, we 
	derive the same contradiction as in \eqref{eq:aa6}, which leads to the conslusion.
\end{proof}

In the following, we shall give the detailed proof of Theorem \ref{inverse2} regarding the unique determination for an  admissible complex rational obstacle by a single far-field measurement.


\begin{proof}[Proof of Theorem \ref{inverse2}(Rational case)]
	We prove the theorem by contradiction. Assume that
	there exists a corner
	$\mathbf x_c$ on $\partial \mathbf{G}$. Without loss of generality, we still assume that $\mathbf x_c$ is a corner of $\Omega_2$, i.e. $\mathbf{ x}_c\in\Omega_2\backslash\overline{ \Omega }_1$. Following the same notation of $B_h(\mathbf{ x}_c)$ in Theorem \ref{inverse10} such that $B_h(\mathbf{ x}_c)\cap \partial\Omega_2=\Pi_i$, $i=1,2,\cdots, n$, for $n\geq2$. With the help of the condition \eqref{eq:cond10} and the Rellich lemma, it is direct to verify that \eqref{eq:aa3} and \eqref{eq:aa4} still hold. Moreover, we can know by Theorem \ref{main-vani} that $u^1$ fulfills \eqref{inver1-ome} and
	\begin{equation}\label{contr}
		u^1(\mathbf{ x}_c)=0,\quad\nabla u^1(\mathbf{ x}_c)\neq0.
	\end{equation}
	under the condition \eqref{cond51}.
	However, for the admissible complex rational obstacle $\Omega_2$, under the assumption \eqref{eq:deg cond}, 
	we can know that $\mathbf{ x}_c$ is either an irrational corner or a rational corner of degree $p\geq 3$. In either of the above case, by Theorem \ref{main-vani}  we can obtain that $u^1$ vanishes at least to the second order, which implies that there holds $\nabla u^1(\mathbf{ x}_c)=0$, and this contradicts to  \eqref{contr}. Similar to the proof of Theorem \ref{inverse10}, if we further assume that $\eta_1\neq\eta_2$, then the uniqueness for the impedance parameter $\eta$ can be deduced immediately by the Holmgren's uniqueness principle.
\end{proof}

\begin{remark}
	The uniqueness results with respect to the admissible impedance obstacles and the corresponding argument in Theorem \ref{inverse10} and Theorem \ref{inverse2} are ``localized" in the neighborhood of $B_h(\mathbf{ x}_c)$ based on the generalized Holmgren's principle obtained from Section \ref{sec2}. Therefore, the results are also applicable to other different types of wave incidences such as the point source.
\end{remark}

\begin{remark}
	In Theorem \ref{inverse10} and Theorem \ref{inverse2}, if the considering adimissible polyhedral obstacles are convex, then we can achieve the global unique identifiaility results which indicate that $\Omega_1=\Omega_2$ and also $\eta_1=\eta_2$ simultaneously by a single far-field measurement. The detailed proof of Corollary \ref{coro-inver} is omitted. 
\end{remark}

\begin{remark}
	We would like to point out that the technical condition $\mathcal{L}(\nabla u^j)(\mathbf{ x}_c)\neq0$ in \eqref{cond51} can be satisfied under some generic conditions on $\Omega$. For example, if the diameter of the obstacle $\Omega$ is relatively small compared with the wavelength in the certain regime that $k\cdot\mathrm{diam}(\Omega)\ll1$, then \eqref{cond51} can hold.
\end{remark}

\subsection{Unique recovery for the inverse diffraction grating problem}

In this subsection, we consider the inverse diffraction grating problem in determining a diffraction grating profile as well as its surface parameter in $\R^3$ by a single far-field pattern. 

\begin{lemma}\cite[Lemma 8.1]{CDL2}\label{ortho}
	Let ${\bf{\xi_\ell}}\in \R^3$, $\ell=1,2,\cdots, n$, be $n$ vectors which are distinct from each other. Let $U\subset\R^3$ be any open subset. Then all the functions in the following set are linearly independent:
	\begin{equation}\notag
		\{e^{\bsi{\bf{\xi_\ell}}\cdot\mathbf{ x}}; \mathbf{ x}\in U, \ell=1,2,\cdots, n\}.
	\end{equation} 
\end{lemma}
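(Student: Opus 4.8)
The plan is to reduce the three-dimensional statement to the classical one-dimensional fact that exponentials with distinct frequencies are linearly independent, using analyticity to pass from the open set $U$ to all of $\R^3$ and a generic-direction argument to collapse the three variables to one. First I would suppose, for contradiction, that there are constants $c_1,\dots,c_n\in\C$, not all zero, with
\[
\sum_{\ell=1}^n c_\ell\, e^{\bsi\boldsymbol{\xi}_\ell\cdot\bfx}=0\quad\text{for all }\bfx\in U.
\]
Each summand is the restriction to $\R^3$ of an entire function of $\bfx$, so the left-hand side is real-analytic on all of $\R^3$; since it vanishes on the nonempty open set $U$ and $\R^3$ is connected, the identity theorem for real-analytic functions forces
\[
\sum_{\ell=1}^n c_\ell\, e^{\bsi\boldsymbol{\xi}_\ell\cdot\bfx}=0\quad\text{for all }\bfx\in\R^3.
\]
This removes any dependence on the particular shape of $U$ and lets me argue globally.

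Next I would select a single direction along which the frequencies separate. For each pair $\ell\neq\ell'$ the set $\{\mathbf v\in\R^3:(\boldsymbol{\xi}_\ell-\boldsymbol{\xi}_{\ell'})\cdot\mathbf v=0\}$ is a proper linear subspace, because $\boldsymbol{\xi}_\ell\neq\boldsymbol{\xi}_{\ell'}$ by hypothesis; a finite union of such hyperplanes cannot exhaust $\R^3$, so I may fix a vector $\mathbf v$ for which the scalars $a_\ell:=\boldsymbol{\xi}_\ell\cdot\mathbf v$ are pairwise distinct. Restricting the global identity to the line $\bfx=t\mathbf v$ then yields
\[
\sum_{\ell=1}^n c_\ell\, e^{\bsi a_\ell t}=0\quad\text{for all }t\in\R,
\]
a one-dimensional identity with distinct exponents $a_1,\dots,a_n$.

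Finally I would differentiate this identity $k$ times in $t$ and evaluate at $t=0$, obtaining $\sum_{\ell=1}^n c_\ell(\bsi a_\ell)^k=0$ for $k=0,1,\dots,n-1$. This is a linear system in the unknowns $c_\ell$ whose coefficient matrix is the Vandermonde matrix built from the distinct nodes $\bsi a_1,\dots,\bsi a_n$, hence invertible; therefore $c_1=\dots=c_n=0$, contradicting the assumption and establishing the asserted linear independence. I do not anticipate a genuine obstacle: the only steps requiring care are the passage from $U$ to $\R^3$ (justified by joint real-analyticity of the finite exponential sum) and the existence of a separating direction $\mathbf v$ (a finite-union-of-hyperplanes argument), while the concluding Vandermonde step is entirely standard.
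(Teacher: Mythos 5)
Your proof is correct as written. There is, however, nothing in this paper to compare it against: the lemma is quoted verbatim from \cite[Lemma 8.1]{CDL2} and no proof is reproduced here, so your argument stands as a self-contained substitute for the citation. Your chain of reasoning --- real-analytic continuation of the finite exponential sum from the nonempty open set $U$ to all of $\mathbb{R}^3$, selection of a direction $\mathbf{v}$ outside the finitely many proper subspaces $\{\mathbf{v}\in\mathbb{R}^3:(\boldsymbol{\xi}_\ell-\boldsymbol{\xi}_{\ell'})\cdot\mathbf{v}=0\}$ so that the scalars $a_\ell=\boldsymbol{\xi}_\ell\cdot\mathbf{v}$ are pairwise distinct, and the invertibility of the Vandermonde matrix with nodes $\mathrm{i}a_1,\ldots,\mathrm{i}a_n$ --- is complete, and each step is justified exactly as you indicate. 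A frequently used alternative, after your continuation step, is distributional: the identity $\sum_{\ell}c_\ell e^{\mathrm{i}\boldsymbol{\xi}_\ell\cdot\mathbf{x}}=0$ on $\mathbb{R}^3$ yields, upon taking the Fourier transform in the sense of tempered distributions, $\sum_{\ell}c_\ell\,\delta_{\boldsymbol{\xi}_\ell}=0$, and testing against bump functions supported near each $\boldsymbol{\xi}_\ell$ annihilates the coefficients one at a time. Your route buys elementarity (no distribution theory) and, incidentally, extends verbatim to pairwise distinct \emph{complex} vectors $\boldsymbol{\xi}_\ell\in\mathbb{C}^3$, since each set $\{\mathbf{v}\in\mathbb{R}^3:(\boldsymbol{\xi}_\ell-\boldsymbol{\xi}_{\ell'})\cdot\mathbf{v}=0\}$ is still contained in a proper real subspace and the Vandermonde step only needs distinct complex nodes; this robustness is not irrelevant in the present context, where the Rayleigh vectors $\boldsymbol{\xi}_n$ in \eqref{nota0} have purely imaginary third component for the evanescent modes $|\alpha_n|>k$.
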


\begin{proof}[Proof of Theorem \ref{thm-grat0}(Irrational case)]
	We prove this theorem by contradiction. Without loss of generality, we assume that there exists a corner point $\mathbf{ x}_c$ of $\Lambda_f$ lies on $\partial\mathbf{G}\backslash\Lambda_g$. By the wellposedness of the diffraction grating problem \eqref{model0} and the unique continuation property, we can know from \eqref{equi-cond0} that
	\begin{equation}\notag
		u_f(\mathbf{ x}; k, \mathbf{d})=u_g(\mathbf{ x};k, \mathbf{d})\quad\mbox{for}\quad \mathbf{ x}\in\mathbf{G}.
	\end{equation}
	Indeed, define
	\begin{equation}\notag
	v(\mathbf{ x}; k, \mathbf{d}):=u_f(\mathbf{ x}; k, \mathbf{d})-u_g(\mathbf{ x}; k, \mathbf{d}).
	\end{equation}
	Denote $\mathbf{\Sigma}:=\mathbf{G}\cap\{\mathbf{ x}\in\R^3; \mathbf{ x}'\in\R^2, x_3>b\}\subset\R^3$, then it is obvious that $v(\mathbf{ x}; k, \mathbf{d})$ fulfills
	\begin{equation}\notag
		\Delta v+ k^2 v=0\quad\mbox{in} \ \mathbf{\Sigma}; \quad v=0 \ \mbox{on} \ \Gamma_b, 
	\end{equation}
	and the Rayleigh series expansion \eqref{rayley0}, where $\Gamma_b$ is the boundary of $\mathbf{\Sigma}$. Thus, from the uniqueness of the diffraction  grating scattering problem \eqref{model0}, we can know that $v=0$ in $\mathbf{\Sigma}$. Since $u_f(\mathbf{ x}; k,\mathbf{d})$ and $u_g(\mathbf{ x}; k, \mathbf{d})$ are analytic in $\mathbf{G}$, it is direct to derive that $v(\mathbf{ x}; k, \mathbf{d})$ is analytic in $\mathbf{G}$, which implies $v=0$ in $\mathbf{G}$. Therefore, we have $u_f(\mathbf{ x}; k, \mathbf{d})=u_g(\mathbf{ x}; k, \mathbf{d})$ in $\mathbf{G}$.
	
	Since $\mathbf{ x}_c\in\Lambda_f$ lying on $\partial\mathbf{G}\backslash\Lambda_g$, for suffictiently small $h\in\R^+$, suppose $B_h(\mathbf{ x}_c)\Subset\Omega_g$ such that for $n\geq2$
	\begin{equation}\label{B-grat}
		B_h(\mathbf{ x}_c)\cap\Lambda_f=\Pi_i,\quad i=1,2,\cdots, n.
	\end{equation}
	It is clear that $\Pi_i\subset\Lambda_f\backslash\Lambda_g\subset\partial\mathbf{G}$, $i=1,2,\cdots, n$. Following a similar argument in the proof of Theorem \ref{inverse10}, we can obtain that
	\begin{equation}\notag
		u_g(\mathbf{ x}; k,\mathbf{d})=0 \ \mbox{ for } \ x_3>\max_{\mathbf{ x}'\in[0,2\pi)^2} g(\mathbf{ x}'),
	\end{equation}  
	by utilizing the generalized Holmgren's principle and Theorem \ref{main-vani}. Moreover, we know that $u_g(\mathbf{ x}; k, \mathbf{d})$ satisfies the Rayleigh series expansion as follows
	\begin{equation}\label{expan2}
		u_g(\mathbf{ x}; k, \mathbf{d})=e^{\bsi k \mathbf{ d}\cdot\mathbf{x}}+\sum_{\mathbf{n}\in\mathbb{Z}^2}u_n e^{\bsi{\bf \xi_n}\cdot\mathbf{ x}} \ \mbox{ for } \ x_3>\max_{\mathbf{ x}'\in[0,2\pi)^2}g(\mathbf{ x}'),
	\end{equation}
	where ${\bf \xi_n}$ is defined in \eqref{nota0}.
	
	Combining with \eqref{def-dire0} and \eqref{nota0}, it is easy to calculate that in \eqref{expan2},
	\begin{equation}\notag
	k\mathbf{ d}=(k\sin\phi\cos\theta, k\sin\phi\sin\theta, -k\cos\phi)=(\alpha_{0}, -\beta_0),
	\end{equation}
	with 
	\begin{equation}\notag
		\alpha_0=\alpha:=(k\sin\phi\cos\theta, k\sin\phi\sin\theta).
	\end{equation}
	Clearly, $k\mathbf{ d}\notin\{{\bf{\xi}_n}|n\in\mathbb{Z}^2\}$ since $\phi\in(-\pi/2,\pi/2)$ and $\theta\in[0, 2\pi)$.
	Besides, from \eqref{rayley0} and \eqref{nota0}, we can know that any two vectors of $\{{\bf{\xi}_n}|n\in\mathbb{Z}^2\}$ are distinct from each other. Therefore, we can deduce the contradiction in view of \eqref{expan2} by Lemma \ref{ortho}.
	
	The proof of the uniqueness of $\eta$ is similar to the proof of Theorem \ref{inverse10} and we skip the details here to avoid repetition.
\end{proof}

Finally, we sketch the proof of the unique determination results for admissible rational polyhedral diffraction gratings as follows.

\begin{proof}[Proof of Theorem \ref{thm-grat2} (Rational case)]
	We prove by absurdity. Following the same notations and assumptions above in the proof of Theorem \ref{thm-grat0}, we suppose that there exists a corner point $\mathbf{ x}_c\in\Lambda_f$ which lies on $\partial\mathbf{G}\backslash\Lambda_g$ such that $B_h(\mathbf{ x}_c)\Subset\Omega_g$ and \eqref{B-grat} holds. Using \eqref{eq:deg cond diff}, by Theorem \ref{main-vani}, we know that $u_g(\mathbf{ x}; k, \mathbf{ d})$ satisfies
	\begin{equation}\notag
		u_g(\mathbf{ x}_c)=0,\quad \nabla u_g(\mathbf{ x}_c)=0,
	\end{equation}
which contradicts with \eqref{equi-cond2}. The uniqueness result can now be attained by a similar absurdity as stated in the proof of Theorem \ref{inverse2}. 
\end{proof}

\section*{Acknowledgement}
The work of X. Cao was supported by the Austrian Science Fund (FWF): P 32660.
The work of H. Diao was supported in part by the startup fund from Jilin University. 
The work of H. Liu was supported by the Hong Kong RGC General Research Fund (projects 12301420, 12302919 and 12301218). 
The work of J. Zou was supported by the Hong Kong RGC General Research Fund (project 14304517) and NSFC/Hong Kong RGC Joint Research Scheme 2016/17 (project N CUHK437/16).

\end{document}